\documentclass[11pt]{article}
\usepackage[a4paper, total={6in, 8in}]{geometry}
\usepackage{enumerate}
\usepackage{amsmath}
\usepackage{amssymb,latexsym}
\usepackage{amsthm}
\usepackage{color}
\usepackage{cancel}
\usepackage{graphicx}
\usepackage{cite}
\usepackage{changes}
\usepackage{lineno}
\usepackage{hyperref}
\usepackage{comment}
\usepackage{amscd}

\newtheorem{theorem}{Theorem}[section]

\newtheorem{lemma}[theorem]{Lemma}
\newtheorem{corollary}[theorem]{Corollary}
\newtheorem{definition}[theorem]{Definition}
\newtheorem{proposition}[theorem]{Proposition}

\newtheorem{example}[theorem]{Example}

\newtheorem{remark}[theorem]{Remark}

\newtheorem{open}[theorem]{Open Problem}

\newcommand{\fqn}{\mathbb{F}_{q^n}}

\newcommand{\F}{{\mathbb F}}

\newcommand{\Tr}{{\mathrm Tr}}

\newcommand{\fq}{{\mathbb F}_{q}}

\newcommand{\la}{\langle}
\newcommand{\ra}{\rangle}

\newcommand{\PG}{\mathrm{PG}}
\newcommand{\N}{\mathrm{N}}

%%%%%%%%%%%%%%%%%%%%%%%%%%%%%%%%%%%%%%%%%%%%%%%%%%%%%%%

\title{On the weight distribution of linear sets with complementary weights and related constructions}
\author{Geertrui Van de Voorde \thanks{University of Canterbury, Chrsitchurch, New Zealand} \and Ferdinando Zullo \thanks{Universit\`a degli studi della Campania Luigi Vanvitelli, Caserta, Italy}}
\date{ }

\begin{document}
\maketitle

\begin{abstract}
In this paper, we continue the study of linear sets with complementary weights. We find criteria to determine the set of points of any fixed weight and use this to present particular linear sets with few points of weight more than one. 

We also present a product-type construction for linear sets of complementary type arising from any linear set, allowing us to control the weight distribution of the obtained linear set. Finally, we use this construction to create linear sets with many different weights, along with point sets of even type with many distinct intersection numbers.
\end{abstract}

\noindent\textbf{MSC2020:}{ 51E20; 05B25; 51E30 }\\
\textbf{Keywords:}{ 
Linear set, scattered space, complementary weights, set of even type}

\section{Preliminaries}
In the past decade, linear sets have become one of the central objects of study within finite geometry. Originally motivated by their connections with linear blocking sets and translation ovoids \cite{translationovoids}, they have increased importance since the connection with rank-metric codes was established \cite{sheekey2016new}. We refer to \cite{polverino2010linear,lavrauw2015field} for more background and applications.

An $\F_q$-linear set $L_U$ in $\PG(1,q^n)$ of rank $k$ is a point set whose defining vectors form a $k$-dimensional $\F_q$-vector space $U$. More formally, writing $\langle u\rangle_{\F_{q^n}}$ for the projective point in $\PG(1,q^n)$ obtained from all $\F_{q^n}$-multiples of a vector $u\in \F_{q^n}^2$, we have that $$L_U=\{\langle u\rangle_{\F_{q^n}}\mid u\in U\setminus\{0\}\}. $$
The weight of a point $\langle u\rangle_{\F_{q^n}}$ in $L_U$ is defined as the $\F_q$-dimension of the subspace $\langle u\rangle_{\F_{q^n}}\cap U$. 

A linear set of rank $n$ in $\PG(1,q^n)$ can be described via an $\F_q$-linearised polynomial as follows. Let $\mathcal{L}_{n,q}$ be the set of all $\F_q$-linearised polynomials over $\F_{q^n}$, that is, all polynomials of the form $f(x)=\sum_{i=0}^{n-1}a_i x^{q^i},$ where $a_i\in \F_{q^n}$. Then it is well-known that an $\F_q$-linear set of rank $n$ in $\PG(1,q^n)$, disjoint from the point $\langle(0,1)\rangle_{\F_{q^n}}$ can always be written as $\{\langle(x,f(x)) \rangle_{\F_{q^n}}\mid x\in \F_{q^n}^*\}$ for some $f\in \mathcal{L}_{n,q}.$

In this paper, we will focus on a particular class of linear sets, namely those with complementary weights. 
An $\fq$-linear set $L_U$ of rank $k$ in $\PG(1,q^n)$ is a set with {\em complementary weights} if it has two points $P_1,P_2$ whose weights sum to $k$.

Linear sets with complementary weights have been studied before, see e.g. \cite{napolitano2022linear,napolitano2022classifications,jena2021linear} and \cite{adriaensen2024minimum,santonastaso2025completely,zullo2023multi} for some applications.
We will list some of their main properties in the next Section.
In this note, we will contribute to this study as follows:
\begin{itemize}
\item We give a new criterion to determine the set of points of any fixed weight in a linear set of complementary type (see Theorem \ref{thm:weightSxT}, and Corollary \ref{cor:weightscomplpoli} for an algebraic description).
\item We use these criteria to present linear sets for which we can completely determine the weight distribution (see Propositions \ref{prop:tracextrace} and \ref{prop:powerqxpowerq}) and for which we show that only a few weights are possible (Proposition \ref{lem:tracewithxq} and Proposition \ref{prop:LPxLP}).
\item We present a product-type construction for linear sets of complementary weight in $\PG(1,q^{2t})$ from any linear set in $\PG(1,q^t),$ and show how their weight distributions are related (see Theorem \ref{thm:bigfromsmall}).
\item We use this product construction to create examples of linear sets with {\em many} different weights (see Corollary \ref{cor:manyweights}), and even sets in $\mathrm{PG}(2,q)$ with many different intersection numbers (see Corollary \ref{cor:evenset}).

\end{itemize}

\section{Introduction and previous results}

In the paper~\cite{napolitano2022linear}, the authors considered linear sets of rank~$k$ in $\PG(1, q^n)$ with complementary weights~$r$ and~$k - r$. 

First, we observe that, by~\cite[Proposition~3.2]{napolitano2022linear}, if $L_W$ is an $\F_q$-linear set of rank~$k$ in $\PG(1, q^n)$ for which there exist two distinct points $P, Q \in L_W$ such that 
\[
w_{L_W}(P) = r \quad \text{and} \quad w_{L_W}(Q) = k - r,
\]
then $L_W$ is $\mathrm{PGL}(2, q^n)$-equivalent to a linear set $L_U$, where $U = S \times T$ for some $\F_q$-subspaces $S, T \subseteq \F_{q^n}$ of dimensions~$r$ and~$k - r$, respectively. 
Therefore, whenever we consider linear sets with complementary weights, we assume that they are defined by a subspace of the form $S \times T$, as above.

Also, in \cite{napolitano2022classifications}, it was derived a way to find the points with second possible maximum weight (when $r\neq k-r$) or the maximum weight (when $r=k-r$). More precisely, they showed the following:

\begin{theorem}\label{thm:numberopointsr}\cite[Theorem 3.4]{napolitano2022classifications}
Let $U=S\times T$, where $S$ is a $(k-r)$-dimensional $\fq$-subspace of $\fqn$ with $r \leq k-r$ and $T$ is an $r$-dimensional $\F_q$-subspace of $\F_{q^n}$ and suppose that $T=\langle a_1,\ldots,a_r\rangle_{\fq}$ for some $a_1,\ldots,a_r \in \fqn$.
The set of points of weight $r$ in $L_U$ different from $\langle (1,0)  \rangle_{\fqn}$ is 
\[ \{ \langle (\xi,1)\rangle_{\fqn} \colon \xi \in a_1^{-1}S\cap \ldots \cap a_r^{-1} S \} \]
and its size is $q^j$ with $j=\dim_{\fq}(a_1^{-1}S\cap \ldots \cap a_r^{-1} S)$.
\end{theorem}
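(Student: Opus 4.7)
The plan is to identify, for a point $P = \langle(\xi,1)\rangle_{\fqn}$ different from $\langle(1,0)\rangle_{\fqn}$, a clean linear-algebraic reformulation of the condition $w_{L_U}(P) = r$. By definition of weight, this quantity equals $\dim_{\fq}(\langle(\xi,1)\rangle_{\fqn}\cap U)$. A generic vector of $\langle(\xi,1)\rangle_{\fqn}$ has the form $(\lambda\xi,\lambda)$ for some $\lambda \in \fqn$, and lies in $U = S\times T$ precisely when $\lambda \in T$ and $\lambda\xi \in S$. Projection onto the second coordinate then gives an $\fq$-linear isomorphism
$$\langle(\xi,1)\rangle_{\fqn}\cap U \;\cong\; V_\xi := \{\lambda \in T : \lambda\xi \in S\} \subseteq T,$$
so $w_{L_U}(P) = \dim_{\fq} V_\xi$.

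Next, I will observe that $V_\xi$ is an $\fq$-subspace of $T$, so $\dim_{\fq} V_\xi \le r$, with equality if and only if $V_\xi = T$, i.e.\ $T\xi \subseteq S$. Writing $T = \langle a_1,\ldots,a_r\rangle_{\fq}$ and noting that each $a_i \neq 0$ (the $a_i$ form a basis of $T$), the inclusion $T\xi \subseteq S$ is equivalent to the $r$ conditions $a_i\xi \in S$, and hence to
$$\xi \in a_1^{-1}S \cap \cdots \cap a_r^{-1}S.$$
Each set $a_i^{-1}S$ is an $\fq$-subspace of $\fqn$, so the intersection is an $\fq$-subspace, of some dimension $j$.

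To finish, I will count: distinct values of $\xi$ give distinct projective points $\langle(\xi,1)\rangle_{\fqn}$, so the weight-$r$ points of $L_U$ different from $\langle(1,0)\rangle_{\fqn}$ are exactly the $\langle(\xi,1)\rangle_{\fqn}$ with $\xi$ in the intersection, and their number is $q^j$. (In particular $\xi = 0$ always belongs to the intersection and yields $\langle(0,1)\rangle_{\fqn}$, whose weight is $r$ since $V_0 = T$.) The proof is a direct unpacking of definitions followed by elementary $\fq$-linear algebra, so I do not anticipate a genuine obstacle; the only subtlety worth flagging is that the upper bound $\dim_{\fq} V_\xi \le r$ is automatic from $V_\xi \subseteq T$ and does not invoke the hypothesis $r \le k-r$, which enters only to identify $r$ as the relevant second-largest weight to search for (or the largest, when $r=k-r$).
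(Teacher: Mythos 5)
Your proof is correct and follows essentially the same route as the paper: the paper obtains this statement as the $i=r$ case of its Theorem 3.2, whose proof extracts $\F_q$-independent vectors $(s_m,t_m)$ from $\langle(\xi,1)\rangle_{\fqn}\cap U$ and notes $s_m/t_m=\xi$, which is exactly your identification of that intersection with $V_\xi=\{\lambda\in T:\lambda\xi\in S\}$. Your maximality observation ($\dim_{\fq}V_\xi=r$ forces $V_\xi=T$, hence $\xi\in a_1^{-1}S\cap\cdots\cap a_r^{-1}S$) is the clean reason why no union over subspaces of $T$ is needed here, matching the paper's remark that for $i=r$ there is only one $r$-dimensional subspace of $T$.
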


Recall that the linear set $L_{S\times T}$ as defined in Theorem \ref{thm:numberopointsr} contains the point $\langle (1,0)\rangle_{\F_{q^n}}$ which has weight $k-r\geq r$. Therefore, all other points have weight at most $r$, and we know that $\langle(0,1)\rangle_{\F_{q^n}}$ has weight $r$.
We will first use the condition derived in Theorem \ref{thm:numberopointsr} to show that it is always possible to obtain a linear set $L_{S\times T}$ with, apart from $\langle(0,1)\rangle_{\F_{q^n}}$, no further points of weight $r$:
\begin{proposition} For any $(k-r)$-dimensional $\fq$-subspace $S$ in $\fqn$ with $k-r \leq n/2$, $k \leq n$, $r\geq 2$, there exists an $r$-dimensional $\fq$-subspace $T$ in $\fqn$ such that the $\fq$-linear set $L_{S\times T}$ of rank $k$ in $\PG(1,q^n)$ has no points of weight $r$ apart from  $\langle(0,1)\rangle_{\F_{q^n}}$ and $\langle (1,0)\rangle_{\F_{q^n}}$ (when $k-r=r$).
\end{proposition}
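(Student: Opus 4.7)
The plan is to apply Theorem \ref{thm:numberopointsr} directly: writing $T=\langle a_1,\ldots,a_r\rangle_{\fq}$, that theorem identifies the weight-$r$ points of $L_{S\times T}$ distinct from $\langle (1,0)\rangle_{\fqn}$ as exactly the points $\langle (\xi,1)\rangle_{\fqn}$ with $\xi\in\bigcap_{i=1}^{r} a_i^{-1}S$. Since $\xi=0$ (corresponding to $\langle (0,1)\rangle_{\fqn}$) always lies in this intersection, my goal reduces to choosing the $a_i$ so that $\bigcap_{i=1}^{r} a_i^{-1}S=\{0\}$. Because $r\geq 2$, it will suffice to arrange that already two of these subspaces meet only in zero: any completion of a pair $a_1,a_2$ with $a_1^{-1}S\cap a_2^{-1}S=\{0\}$ to an $\fq$-linearly independent $r$-tuple $a_1,\ldots,a_r\in\fqn$ does the job, and such a completion is available since $r\leq k\leq n$.

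Fixing $a_1=1$, I then need to find an element $a\in\fqn^{*}$ with $S\cap aS=\{0\}$ and set $a_2=a^{-1}$. Any such $a$ automatically lies outside $\fq$ (since $\fq$-scalars stabilise $S$), so $1$ and $a^{-1}$ are $\fq$-linearly independent. Existence of $a$ comes from a standard counting argument: the set of ``bad'' elements $a\in\fqn^{*}$, i.e.\ those with $S\cap aS\neq\{0\}$, is contained in $\{\,s'/s : s,s'\in S\setminus\{0\}\,\}$, whose cardinality is at most $(q^{k-r}-1)^{2}$. Under the hypothesis $k-r\leq n/2$ this is bounded by $(q^{n/2}-1)^{2}=q^{n}-2q^{n/2}+1<q^{n}-1$, so some $a\in\fqn^{*}$ avoids the bad set.

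With such an $a$ in hand, I extend $\{1,a^{-1}\}$ to an $\fq$-basis $\{a_1,\ldots,a_r\}$ of an $r$-dimensional subspace $T\subseteq\fqn$. Then
\[
\bigcap_{i=1}^{r} a_i^{-1}S\ \subseteq\ S\cap aS\ =\ \{0\},
\]
and Theorem \ref{thm:numberopointsr} delivers the conclusion: $\langle (0,1)\rangle_{\fqn}$ is the only weight-$r$ point of $L_{S\times T}$ apart from $\langle (1,0)\rangle_{\fqn}$, the latter being a weight-$r$ point precisely when $k-r=r$. The only genuinely delicate step is the counting estimate, but the hypothesis $k-r\leq n/2$ is exactly what makes the bound strict, with no room to spare at equality.
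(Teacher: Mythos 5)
Your proof is correct and follows essentially the same route as the paper: both reduce the problem, via Theorem \ref{thm:numberopointsr}, to finding an element $\xi$ with $S\cap \xi S=\{0\}$ and then extending $\{1,\xi\}$ (in your case $\{1,a^{-1}\}$) to a basis of an $r$-dimensional $T$. The only difference is cosmetic: you establish the existence of such an element by directly counting the ratios $s'/s$ with $s,s'\in S\setminus\{0\}$, whereas the paper argues by contradiction that otherwise the linear set $L_{S\times S}$, of rank $2(k-r)\leq n$, would cover all of $\PG(1,q^n)$ --- two bookkeepings of the same estimate.
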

\begin{proof}
We first show that we can find an element $\xi \in \fqn$ such that $S\cap \xi S=\{0\}$. Suppose to the contrary that for any $\xi \in \fqn$ we have that $\dim_{\fq}(S\cap \xi S)\geq 1$, so for all $\xi$, there exists some $s\in S\cap \xi S$. This implies that $s=\xi s'$ for some $s'\in S$, and hence, that $\langle (1,\xi)\rangle_{\F_{q^n}}=\langle (s',\xi s')\rangle_{\F_{q^n}}=\langle (s',s)\rangle_{\F_{q^n}}\in L_{S\times S}.$
Since $\langle (0,1)\rangle_{\F_{q^n}}\in L_{S\times S}$ too, it follows that $L_{S\times S}=\PG(1,q^n)$. But since the rank of $L_{S\times S}$ is $2(k-r)\leq n$, this is a contradiction. Therefore, there exists $\xi \in \fqn$ such that $S\cap \xi S$ has dimension zero.
Let $T$ be an $\fq$-subspace of $\fqn$ of dimension $r$ containing $1$ and $\xi$ and suppose that $\{1,\xi,a_3,\ldots,a_{r}\}$ is its basis.
Since 
\[ S\cap \xi^{-1}S\cap a_3^{-1}S \cap \ldots a_{r}^{-1}S\subseteq S\cap \xi^{-1}S=\{0\}, \]
by Theorem \ref{thm:numberopointsr} we get the assertion.
\end{proof}

In the paper \cite{napolitano2022linear}, the authors investigate the possibility of the linear set $L_{S\times T}$ having no points with weight more than one, apart from $\langle(1,0)\rangle_{\F_{q^n}}$ and $\langle(0,1)\rangle_{\F_{q^n}}$; the following corollary explains the interest in linear sets of rank $2t$ with exactly two points of weight $t$ in $\PG(1,q^{2t})$. 

\begin{corollary}\label{cor:UasUST}\cite[Corollary 4.3]{napolitano2022linear}
Let $L_U$ be an $\fq$-linear set of rank $k$ in $\PG(1,q^n)$ for which there exist two distinct points $P,Q \in L_U$ such that
$w_{L_U}(P)=s$, $w_{L_U}(Q)=t$, $s+t=k\leq n$ and $s,t>1$. 
If $P$ and $Q$ are the only points of $L_U$ of weight greater than one, then $s\leq \frac{n}2$ and $t\leq \frac{n}2$.
In particular, if $s+t=n$, then $n$ is even and $s=t=\frac{n}2$.
\end{corollary}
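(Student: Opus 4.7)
The plan is to reduce to the canonical form $U = V_1 \times V_2$ and then derive the bounds $s,t\leq n/2$ via a Grassmann-type dimension count. By Proposition~3.2 of \cite{napolitano2022linear}, recalled in the excerpt, I may assume up to $\mathrm{PGL}(2,q^n)$-equivalence that $U = V_1 \times V_2$ with $P = \langle(1,0)\rangle_{\fqn}$, $Q = \langle(0,1)\rangle_{\fqn}$, $\dim_{\fq} V_1 = s$ and $\dim_{\fq} V_2 = t$. A direct computation identifies the weight of any further point $\langle(\lambda,1)\rangle_{\fqn}$ with $\lambda\in\fqn^*$ as $\dim_{\fq}(V_1\cap \lambda V_2)$, so the assumption that $P,Q$ exhaust the points of weight $>1$ becomes the uniform bound
\[
\dim_{\fq}(V_1\cap\lambda V_2)\leq 1 \qquad \text{for every } \lambda\in\fqn^*.
\]

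To show $t\leq n/2$ I argue by contradiction. Suppose $t > n/2$, i.e.\ $2t\geq n+1$. Applying the Grassmann relation to the two $t$-dimensional $\fq$-subspaces $V_2, \mu V_2$ of $\fqn$ gives $\dim_{\fq}(V_2\cap \mu V_2) \geq 2t-n\geq 1$ for every $\mu\in\fqn^*$. Using $s\geq 2$, I pick $\fq$-linearly independent $v,v'\in V_1$ and set $\mu_0:=v(v')^{-1}$, noting $\mu_0\notin\fq$. Activating the Grassmann estimate at $\mu=\mu_0$ yields nonzero $w\in V_2$ with $w':=\mu_0 w\in V_2$; since $\mu_0\notin\fq$, the pair $\{w,w'\}$ is $\fq$-independent. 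Setting $\lambda:=v'w^{-1}$, one checks that $\lambda w=v'\in V_1$ and $\lambda w' = \mu_0 v'= v\in V_1$, so $v,v'$ lie in $V_1\cap\lambda V_2$, giving $\dim_{\fq}(V_1\cap\lambda V_2)\geq 2$ — a contradiction. The bound $s\leq n/2$ is obtained by the same argument with $V_1,V_2$ swapped (now using $t\geq 2$).

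The final clause is then immediate: if $s+t=n$, the inequalities $s\leq n/2$ and $t\leq n/2$ force equality $s=t=n/2$, so $n$ must be even.

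The main obstacle is finding the right element $\lambda$ that realises a $2$-dimensional intersection. The crux is to use an $\fq$-independent ratio $\mu_0$ coming from $V_1$ to exploit the strong intersection property $\dim(V_2\cap\mu_0 V_2)\geq 1$ guaranteed by $2t>n$, and then to rescale the resulting $\fq$-independent pair in $V_2$ back into the already $\fq$-independent pair $\{v,v'\}$ in $V_1$. Once this scaling is set up, the remainder of the argument is a straightforward Grassmann dimension count.
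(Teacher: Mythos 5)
Your proof is correct. There is nothing in the paper to compare it against: Corollary~\ref{cor:UasUST} is quoted from \cite[Corollary 4.3]{napolitano2022linear} without proof. Your argument is nonetheless a sound, self-contained derivation, and it runs on exactly the mechanism the paper later isolates in Corollary~\ref{weg}: after the reduction to $U=V_1\times V_2$ (and the weight identity $w_{L_U}(\langle(\lambda,1)\rangle_{\fqn})=\dim_{\fq}(V_1\cap\lambda V_2)$, which agrees with Lemma~\ref{prop:weightpointSalphaT}), the hypothesis that $P,Q$ are the only points of weight greater than one is equivalent to $\dim_{\fq}(a_1V_1\cap a_2V_1)=0$ for all $\fq$-independent $a_1,a_2$ in $V_2$ (and symmetrically), which by Grassmann forces $2\dim_{\fq}V_1\le n$ as soon as the other factor has dimension at least $2$. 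Your explicit element $\lambda=v'w^{-1}$ is an unpacked version of this: the pair $w,\mu_0 w\in V_2$ witnesses $\dim_{\fq}(V_2\cap\mu_0V_2)\ge 2t-n\ge 1$, and the rescaling transports it onto the $\fq$-independent pair $v,v'\in V_1$, producing a third point of weight at least two. All the small checks hold: $\mu_0=v(v')^{-1}\notin\fq$ because $v,v'$ are $\fq$-independent, the point $\langle(\lambda,1)\rangle_{\fqn}$ differs from $P$ and $Q$ since $\lambda\ne 0$, and the hypotheses $s,t>1$ are used precisely where needed.
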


The same authors also show that linear sets with complementary weights with only two points of weight greater than one admit an algebraic description: 

\begin{theorem} \label{th:relationfandg}\cite[Theorem 4.6]{napolitano2022linear}
Let $L_W$ be an $\fq$-linear set of $\PG(1,q^{2t})$ of rank $n=2t$ admitting two points $P$ and $Q$ of weight $t$. Then $L_W$ is $\mathrm{PGL}(2, q^n)$-equivalent to an $\fq$-linear set $L_U$ where
\[ U=T_{g,\eta} \times S_{f,\xi}=\{ (v+\eta g(v),u+\xi f(u)) \colon u,v \in \F_{q^t} \}, \]
with $\eta,\xi \in \F_{q^{2t}}\setminus\F_{q^t}$, $\xi^2=a\xi+b$, $\eta=A\xi+B$ with $a,b,A,B \in \F_{q^t}$ and $f(x),g(x) \in \mathcal{L}_{t,q}$.
Also, $P$ and $Q$ are the only points of $L_W$ with weight greater than one if and only if 
\begin{equation} \label{eq:relationfandg}
f(\alpha_0 v)+f(\alpha_1 A b g(v))+f(\alpha_0 B g(v))=\alpha_1 v+\alpha_0 A g(v)+\alpha_1 A a g(v)+\alpha_1 B g(v),
\end{equation}
has at most $q$ solutions in $v$ for every $\alpha_0,\alpha_1 \in \F_{q^t}$ with $(\alpha_0,\alpha_1)\ne(0,0)$.
\end{theorem}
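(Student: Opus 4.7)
The plan is a two-stage reduction followed by an explicit coordinate computation. By Corollary \ref{cor:UasUST} (which ultimately invokes \cite[Proposition 3.2]{napolitano2022linear}), after a $\mathrm{PGL}(2,q^{2t})$-transformation one may assume $W=S\times T$ with $S,T$ being $t$-dimensional $\F_q$-subspaces of $\F_{q^{2t}}$, and with the two weight-$t$ points located at $P=\langle(1,0)\rangle_{\F_{q^{2t}}}$ and $Q=\langle(0,1)\rangle_{\F_{q^{2t}}}$. To rewrite $S$ as $S_{f,\xi}$ I look for $\xi \in \F_{q^{2t}}\setminus\F_{q^t}$ such that $S\cap \xi\F_{q^t}=\{0\}$: in that case the projection $u+\xi u'\mapsto u$ restricts to an $\F_q$-isomorphism $S\to\F_{q^t}$ whose inverse presents $S$ as $\{u+\xi f(u):u\in\F_{q^t}\}$ for a unique $f\in\mathcal{L}_{t,q}$. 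Existence of such a $\xi$ follows from a counting argument: the bad set $\bigcup_{s\in S\setminus\{0\}} s\F_{q^t}^*$ has at most $(q^t-1)^2$ elements, while $|\F_{q^{2t}}\setminus\F_{q^t}|=q^t(q^t-1)$ is strictly larger, so a good $\xi\notin\F_{q^t}$ exists. The identical argument produces $\eta\in\F_{q^{2t}}\setminus\F_{q^t}$ with $T=T_{g,\eta}$; writing $\eta=A\xi+B$ and letting $\xi^2=a\xi+b$ be the minimal relation of $\xi$ over $\F_{q^t}$ then fixes the remaining scalars $a,b,A,B\in\F_{q^t}$.

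The key geometric observation for the characterisation is that every point of $L_U\setminus\{Q\}$ is of the form $\langle(1,\rho)\rangle_{\F_{q^{2t}}}$ for a unique $\rho\in\F_{q^{2t}}$, the value $\rho=0$ recovers $P$, and for $\rho\ne 0$ the weight of $\langle(1,\rho)\rangle$ equals $\dim_{\F_q}(T_{g,\eta}\cap \rho^{-1}S_{f,\xi})$ (the intersection is in bijection with the set of $\mu\in\F_{q^{2t}}$ for which $\mu(1,\rho)\in U$, via $\mu\leftrightarrow(\mu,\mu\rho)$). Hence ``$P$ and $Q$ are the only points of weight greater than one'' is equivalent to $\dim_{\F_q}(T_{g,\eta}\cap\rho^{-1}S_{f,\xi})\le 1$ for every $\rho\in\F_{q^{2t}}^*$.

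The final step is to convert this into \eqref{eq:relationfandg}. Writing $\rho=\alpha_0+\alpha_1\xi$ with $(\alpha_0,\alpha_1)\ne(0,0)$ and parametrising a generic $x\in T_{g,\eta}$ as $x=v+\eta g(v)$, I expand $\rho x$ in the basis $\{1,\xi\}$ using $\eta=A\xi+B$ and $\xi^2=a\xi+b$; the membership $\rho x\in S_{f,\xi}$ then says that $f$ applied to the $\xi^0$-coefficient of $\rho x$ equals its $\xi^1$-coefficient, which after splitting $f$ over its arguments by $\F_q$-additivity is exactly \eqref{eq:relationfandg}. Since $v\mapsto v+\eta g(v)$ is an $\F_q$-isomorphism $\F_{q^t}\to T_{g,\eta}$, the solution set of \eqref{eq:relationfandg} in $v$ has $\F_q$-dimension equal to $\dim_{\F_q}(T_{g,\eta}\cap \rho^{-1}S_{f,\xi})$, so ``at most $q$ solutions for each nonzero $(\alpha_0,\alpha_1)$'' is exactly the intersection-dimension condition of the previous paragraph, closing the equivalence. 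The main technical nuisance here is that $T_{g,\eta}$ and $S_{f,\xi}$ are graphed in different bases ($\{1,\eta\}$ vs.\ $\{1,\xi\}$), so the cross-term $\alpha_1\xi\cdot\eta g(v)$ must be pushed back into $\{1,\xi\}$-coordinates via $\eta=A\xi+B$ and $\xi^2=a\xi+b$, and the four scalars $a,b,A,B$ mix in a way that is easy to miscalculate; the rest is formal once the normal form is in place.
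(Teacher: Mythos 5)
Your proposal is correct. Note that the paper itself gives no proof of this statement: Theorem \ref{th:relationfandg} is quoted verbatim from \cite[Theorem 4.6]{napolitano2022linear}, so there is nothing internal to compare against. Your reconstruction follows the standard route and is sound: the reduction to $S\times T$ via \cite[Proposition 3.2]{napolitano2022linear}, the counting argument producing $\xi$ with $S\cap\xi\F_{q^t}=\{0\}$ (and hence the graph presentation $S=S_{f,\xi}$), the identification of the weight of $\langle(1,\rho)\rangle_{\F_{q^{2t}}}$ with $\dim_{\F_q}(T_{g,\eta}\cap\rho^{-1}S_{f,\xi})$, and the expansion of $\rho x$ in the $\F_{q^t}$-basis $\{1,\xi\}$ all check out; your $\xi^0$- and $\xi^1$-coefficients reproduce \eqref{eq:relationfandg} exactly. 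Indeed, the computational core of your argument is precisely the calculation the paper performs later in Corollary \ref{cor:weightscomplpoli} (with the roles of the scalars $a,b$ and $A,B$ interchanged and the product taken in the opposite order, which is why the paper parametrises by $\alpha^{-1}$ where you parametrise by $\rho$ — immaterial, since both range over $\F_{q^{2t}}^*$).
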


Using this algebraic description, the authors of \cite{napolitano2022linear} were able to produce linear sets with complementary weights having exactly two points of weight greater than one.% from scattered polynomials.

\section{The weight distribution of the points in a linear set with complementary weights}

We will extend Theorem \ref{thm:numberopointsr} by giving a  description of the set of points with {\em any} given weight in a linear set with complementary weights.

\begin{definition}
Consider $T$ to be an $\fq$-subspace of $\fqn$. Let $\mathcal{L}_i(T)$ be the set of all the $i$-dimensional $\fq$-subspaces contained in $T$.
    \end{definition}

\begin{theorem}\label{thm:weightSxT}
Let $U=S\times T$, where $S$ is a $(k-r)$-dimensional $\fq$-subspace of $\fqn$ with $r \leq k-r$ and $T$ is an $r$-dimensional $\F_q$-subspace of $\F_{q^n}$.
The set of points of weight at least $i$ in $L_U$, different from $\langle (1,0)  \rangle_{\fqn}$, is 
\[ \left\{ \langle (\xi,1)\rangle_{\fqn} \colon \xi \in  \mathcal{I}_i(S,T)\right\}, \] where 

$$\mathcal{I}_i(S,T)=\bigcup_{\substack{a_1,\ldots,a_i \in T\\ \langle a_1,\ldots,a_i \rangle_{\fq} \in \mathcal{L}_i(T)}} a_1^{-1}S\cap \ldots \cap a_i^{-1} S.$$
\end{theorem}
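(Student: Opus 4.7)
The plan is to compute, for an arbitrary point of the form $\langle(\xi,1)\rangle_{\F_{q^n}}$ (which covers all points of $\PG(1,q^n)$ other than $\langle(1,0)\rangle_{\F_{q^n}}$), its weight in $L_U$ directly from the definition, and then translate the inequality ``weight $\geq i$'' into the membership condition defining $\mathcal{I}_i(S,T)$. The weight of $\langle(\xi,1)\rangle_{\F_{q^n}}$ equals $\dim_{\F_q} \{ \lambda \in \F_{q^n} : (\lambda \xi, \lambda) \in S \times T\}$, which is exactly $\dim_{\F_q}(T \cap \xi^{-1}S)$ for $\xi \neq 0$ (and $\dim_{\F_q} T = r$ for $\xi = 0$, a case that is handled by the convention $0^{-1}S = \F_{q^n}$, since $0 \in a^{-1}S$ for every nonzero $a$).

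Next I would establish the two inclusions. For the easy direction, suppose $\xi \in a_1^{-1}S \cap \ldots \cap a_i^{-1}S$ for some $\langle a_1,\ldots,a_i\rangle_{\F_q} \in \mathcal{L}_i(T)$. Then each $a_j$ lies in $T$ and satisfies $\xi a_j \in S$, so $a_1,\ldots,a_i$ are linearly independent elements of $T \cap \xi^{-1}S$, and the weight is at least $i$. Conversely, if the weight of $\langle(\xi,1)\rangle_{\F_{q^n}}$ is at least $i$, then $T \cap \xi^{-1}S$ contains an $i$-dimensional $\F_q$-subspace; choosing any basis $a_1,\ldots,a_i$ of such a subspace gives an element of $\mathcal{L}_i(T)$, and from $\xi a_j \in S$ for all $j$ we obtain $\xi \in a_1^{-1}S \cap \ldots \cap a_i^{-1}S \subseteq \mathcal{I}_i(S,T)$.

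One minor housekeeping point to flag is that the sets $a_1^{-1}S \cap \ldots \cap a_i^{-1}S$ appearing in the union are well-defined on $\mathcal{L}_i(T)$: namely, the intersection depends only on $\langle a_1,\ldots,a_i\rangle_{\F_q}$, not on the chosen basis. This follows at once from the $\F_q$-linearity of $S$, since if $b = \sum_j \mu_j a_j$ with $\mu_j \in \F_q$ and $\xi a_j \in S$ for all $j$, then $\xi b = \sum_j \mu_j (\xi a_j) \in S$. I would include this as a one-line observation before the main argument, since it is what makes the union a genuine union over $i$-dimensional subspaces of $T$.

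There is no substantive obstacle here: the statement is a direct unpacking of the definition of weight once one recognises that an element $a \in T$ contributes to $T \cap \xi^{-1}S$ precisely when $\xi \in a^{-1}S$. The only care needed is in the edge case $\xi = 0$ and in handling the difference between indexing the union by bases versus by subspaces, both of which are resolved by the observations above.
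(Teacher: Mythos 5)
Your proof is correct and follows essentially the same route as the paper's: both directions amount to unpacking the definition of weight, and your identity $w(\langle(\xi,1)\rangle_{\F_{q^n}})=\dim_{\F_q}(T\cap\xi^{-1}S)$ is exactly the content the paper establishes by exhibiting the linearly independent vectors $(b_j,a_j)$ (and later isolates as Lemma~\ref{prop:weightpointSalphaT}). Parametrising the line by $\lambda\mapsto(\lambda\xi,\lambda)$ neatly absorbs the paper's separate argument that the second coordinates $t_1,\ldots,t_i$ are independent, and your edge-case and well-definedness remarks are fine.
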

\begin{proof}

Consider the map \begin{align*}
\Phi: \mathcal{I}_i(S,T)&\to \mathrm{PG}(1,q^n)\\
\xi&\mapsto P_\xi:=\langle (\xi,1)\rangle_{\F_{q^n}}.
\end{align*}

Let $\mathcal{P}_{\geq i}$ be the set of points in $L_U$, different from $\langle(1,0)\rangle_{\F_{q^n}}$ of weight at least $i$. We claim that $\Phi(\mathcal{I}_i(S,T))=\mathcal{P}_{\geq i}.$

Consider a point  $P_\xi \in \Phi(\mathcal{I}_i(S,T))$. Since $\xi\in \mathcal{I}_i(S,T),$ we have that $\xi \in a_1^{-1}S\cap \ldots \cap a_i^{-1} S$, for some $a_1,\ldots,a_i \in T$ that are $\fq$-linearly independent. This implies that there exist $b_1,\ldots,b_i \in S$ such that 
\[ \frac{b_1}{a_1}=\ldots=\frac{b_i}{a_i}=\xi. \]
The vectors $(b_1,a_1),\ldots,(b_i,a_i)$ belong to $U=S\times T$ and are $\F_q$-linearly independent since $a_1,\ldots,a_i$ are $\fq$-linearly independent. Since $\langle (b_j,a_j)\rangle_{\F_{q^n}}=\langle (\xi,1)\rangle_{\F_{q^n}}$ for all $j$, it follows that $U \cap \langle (\xi, 1) \rangle_{\fqn}$ is at least $i$-dimensional, so $P_\xi$ has weight at least $i$.
 It follows that $\Phi(\mathcal{I}_i(S,T))\subseteq \mathcal{P}_{\geq i}.$

Vice versa, consider a point $P\in \mathcal{P}_{\geq i}$, different from $\langle (1,0)\rangle_{\F_{q^n}}$, then $P$ can be written in a unique way as $P_\xi$ for some $\xi$ and  there exists $i$ vectors $(s_1,t_1),\ldots,(s_i,t_i) \in U\cap \langle (\xi, 1) \rangle_{\fqn}$ which are $\fq$-linearly independent.
 Moreover, the elements $t_1,\ldots,t_i$ are $\F_q$-linearly independent, since otherwise there would be a non-zero vector $(\sum_m\lambda_m s_m,0)\in U\cap \langle (\xi, 1) \rangle_{\fqn} $, a contradiction since the second coordinate of any non-zero vector in this intersection needs to be non-zero. 

We find that 
\[ \frac{s_1}{t_1}=\ldots=\frac{s_i}{t_i}=\xi, \]
which implies that $\xi \in t_1^{-1}S\cap\ldots\cap t_i^{-1}S$.

Since we have seen that $t_1,\ldots,t_i$ are $\fq$-linearly independent, $\xi\in \mathcal{I}_i(S,T),$ so $\mathcal{P}_{\geq i}\subseteq \mathcal{I}_i(S,T)$.

\end{proof}

Note that for $i=r$, the above result indeed coincides with Theorem \ref{thm:numberopointsr} as there is only one subspace of $T$ of dimension $r$.
As a corollary, we can now detect the points in $L_U$ with weight exactly $i$.

\begin{corollary}\label{cor:preciseThm3.2}
Let $U=S\times T$, where $S$ is a $(k-r)$-dimensional $\fq$-subspace of $\fqn$ with $r \leq k-r$ and $T$ is an $r$-dimensional $\F_q$-subspace of $\F_{q^n}$.
The set of points of weight exactly $i$ in $L_U$ different from $\langle (1,0)  \rangle_{\fqn}$ is 
\[ \left\{ \langle (\xi,1)\rangle_{\fqn} \colon \xi \in \mathcal{I}_i(S,T)\setminus \mathcal{I}_{i+1}(S,T) \right\}. \]
\end{corollary}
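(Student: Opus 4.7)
The plan is to deduce the corollary as a direct consequence of Theorem \ref{thm:weightSxT} by subtracting the set of points of weight at least $i+1$ from the set of points of weight at least $i$. So I would structure the argument in three short steps.

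First, I would observe the monotonicity of the sets $\mathcal{I}_i(S,T)$: namely, $\mathcal{I}_{i+1}(S,T) \subseteq \mathcal{I}_i(S,T)$. This is because any $\fq$-linearly independent $(i+1)$-tuple $a_1,\ldots,a_{i+1}$ in $T$ contains an $\fq$-linearly independent $i$-tuple $a_1,\ldots,a_i$, and the corresponding intersection $a_1^{-1}S \cap \ldots \cap a_{i+1}^{-1}S$ is contained in $a_1^{-1}S \cap \ldots \cap a_i^{-1}S$, which is one of the sets appearing in the union defining $\mathcal{I}_i(S,T)$.

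Next, I would apply Theorem \ref{thm:weightSxT} twice: once for weight at least $i$, obtaining that the set of such points (other than $\langle(1,0)\rangle_{\fqn}$) is $\{\langle(\xi,1)\rangle_{\fqn} : \xi \in \mathcal{I}_i(S,T)\}$, and once for weight at least $i+1$, obtaining $\{\langle(\xi,1)\rangle_{\fqn} : \xi \in \mathcal{I}_{i+1}(S,T)\}$. A point of $L_U$ distinct from $\langle(1,0)\rangle_{\fqn}$ has weight exactly $i$ if and only if it has weight at least $i$ but not at least $i+1$.

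Finally, since the map $\xi \mapsto \langle(\xi,1)\rangle_{\fqn}$ is injective on $\fqn$, taking the set-theoretic difference of the two descriptions above yields exactly $\{\langle(\xi,1)\rangle_{\fqn} : \xi \in \mathcal{I}_i(S,T) \setminus \mathcal{I}_{i+1}(S,T)\}$, which is the claim. There is essentially no obstacle here: the only point requiring a moment's care is the injectivity of the parametrisation by $\xi$ and the consistent exclusion of $\langle(1,0)\rangle_{\fqn}$ in both applications of Theorem \ref{thm:weightSxT}, both of which are immediate.
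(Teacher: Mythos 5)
Your argument is correct and is precisely the intended derivation: the paper states this as an immediate corollary of Theorem \ref{thm:weightSxT} (with no written proof), obtained by taking the difference of the descriptions for weight at least $i$ and weight at least $i+1$, using that $\xi\mapsto\langle(\xi,1)\rangle_{\fqn}$ is injective. Nothing is missing.
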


\begin{remark}
    The well-known connection between linear sets and rank-metric codes, see \cite{Randrianarisoa2020ageometric,alfarano2022linear,sheekeyVdV}, allows us to rephrase Theorem \ref{thm:weightSxT} and Corollary \ref{cor:preciseThm3.2} in terms of rank-metric codes. More precisely, suppose that $\mathcal{C}$ is a nondegenerate $\fqn$-linear rank-metric code in $\mathbb{F}_{q^n}^k$ of dimension two admitting a generator matrix of the form
    \[
    G=\begin{pmatrix}
        s_1 & \cdots & s_{k-r} & 0 & \cdots & 0 \\
        0 & \cdots & 0 & t_1 & \cdots & t_r
    \end{pmatrix}.
    \]
    Every codeword of $\mathcal{C}$ can be written as $(x_0,x_1)G$ for some $x_0,x_1 \in \fqn$.
    Clearly, the rank weight is the same for proportional codewords and
    \[ \mathrm{rk}_q((1,0)G)=k-r \,\,\text{and}\,\, \mathrm{rk}_q((0,1)G)=r. \]
    Therefore, up to proportionality, we can assume that a nonzero codeword $c \in \mathcal{C}$ not proportional to $(1,0)G$ nor to $(0,1)G$ is of the form $(1,-\xi)G$ for some $\xi \in \fqn$. By \cite[Theorem 2]{Randrianarisoa2020ageometric}, we have that
    \[ \mathrm{rk}_q((1,-\xi)G)=k-w_{L_{S\times T}}(\la (\xi,1)\ra_{\fqn}). \]
    By Corollary \ref{cor:preciseThm3.2}, the number { of codewords of rank weight $n-i$ is given by the size of the following set:}
    \[\left\{ \alpha (\xi,1) \colon \xi \in \mathcal{I}_i(S,T)\setminus \mathcal{I}_{i+1}(S,T), \alpha \in \fqn \right\}.\]
\end{remark}

Using that for any $a_1,a_2 \in \fqn^*$ 
$$\dim_{\fq}(a_1^{-1}S \cap a_2^{-1}S)=\dim_{\fq}(a_1a_2(a_1^{-1}S \cap a_2^{-1}S))=\dim_{\fq}(a_1S \cap a_2S),$$ 
we also get another characterisation for the linear sets of complementary weights with only two points of weight greater than or equal to two.

\begin{corollary}\label{weg}
Let $U=S\times T$, where $S$ is a $(k-r)$-dimensional $\fq$-subspace of $\fqn$ with $r \leq k-r$ and $T$ is an $r$-dimensional $\F_q$-subspace of $\F_{q^n}$.
Then $L_U$ has only two points of weight at least two if and only if 
\[ \dim_{\fq}(a_1S \cap a_2S)=0, \]
for any $a_1,a_2 \in T$ which are $\fq$-linearly independent.
\end{corollary}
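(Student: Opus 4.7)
The plan is to invoke Theorem~\ref{thm:weightSxT} in the special case $i = 2$ and then read off the conclusion using the dimension identity highlighted just before the corollary. First I would observe that the two points $\langle(1,0)\rangle_{\fqn}$ and $\langle(0,1)\rangle_{\fqn}$ always lie in $L_U$ with weights $k-r$ and $r$ respectively; since $r \leq k - r$ and we are asking about weights $\geq 2$, these two automatic points contribute weight at least $2$ precisely when $r \geq 2$, which must be implicitly assumed for the statement to make sense. (Otherwise the only possible point of weight $\geq 2$ is $\langle(1,0)\rangle_{\fqn}$, and the equivalence becomes vacuously one-directional.)

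Next, note that under the parametrisation $\xi \mapsto \langle(\xi,1)\rangle_{\fqn}$, the point $\langle(0,1)\rangle_{\fqn}$ corresponds to $\xi = 0$, and $0$ belongs to every intersection $a_1^{-1}S \cap a_2^{-1}S$; hence $0 \in \mathcal{I}_2(S,T)$ unconditionally. By Theorem~\ref{thm:weightSxT}, the set of points of weight at least $2$ different from $\langle(1,0)\rangle_{\fqn}$ is $\{\langle(\xi,1)\rangle_{\fqn} : \xi \in \mathcal{I}_2(S,T)\}$. Therefore, having \emph{only} the two points $\langle(1,0)\rangle_{\fqn}$ and $\langle(0,1)\rangle_{\fqn}$ of weight $\geq 2$ is equivalent to $\mathcal{I}_2(S,T) = \{0\}$, which in turn is equivalent (by the very definition of $\mathcal{I}_2(S,T)$ as a union of intersections) to
\[
a_1^{-1}S \cap a_2^{-1}S = \{0\}
\]
for every $\fq$-linearly independent pair $a_1, a_2 \in T$.

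Finally, to convert this into the desired condition on $a_1 S \cap a_2 S$, I would apply the identity
\[
\dim_{\fq}\bigl(a_1^{-1}S \cap a_2^{-1}S\bigr) \;=\; \dim_{\fq}\bigl(a_1 a_2 (a_1^{-1}S \cap a_2^{-1}S)\bigr) \;=\; \dim_{\fq}(a_2 S \cap a_1 S),
\]
already recorded in the paragraph preceding the corollary, which comes from the fact that multiplication by the nonzero scalar $a_1 a_2 \in \fqn^*$ is an $\fq$-linear bijection. Combining the two equivalences yields the statement. I do not anticipate any real obstacle; the content of the corollary is essentially a specialisation $i = 2$ of Theorem~\ref{thm:weightSxT} together with the symmetric rewriting of the intersection, so the proof is a short two or three line deduction.
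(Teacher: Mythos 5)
Your proof is correct and follows exactly the route the paper intends: it specialises Theorem~\ref{thm:weightSxT} to $i=2$, identifies $\mathcal{I}_2(S,T)=\{0\}$ with the absence of further points of weight at least two, and applies the identity $\dim_{\fq}(a_1^{-1}S\cap a_2^{-1}S)=\dim_{\fq}(a_1S\cap a_2S)$ recorded immediately before the corollary (the paper gives no further argument than this). Your observation that $r\geq 2$ must be implicitly assumed is a fair point and does not affect the validity of the deduction.
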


We will now focus on the case $n=2t$, where there are two points of weight $t$. Let $\xi \in \mathbb{F}_{q^n}\setminus\mathbb{F}_{q^t}$ and $f(x) \in\mathcal{L}_{t,q}$ and let
\[ S_{f,\xi}=\{ u+\xi f(u) \colon u \in \mathbb{F}_{q^t} \}. \]

The following easy lemma shows that  the weight of the points of a linear set with complementary weights $L_{S\times T}$ can also be derived by finding the dimension of intersection of the subspaces $S$ and the multiplicative cosets of $T$ in $\fqn$.

\begin{lemma}\label{prop:weightpointSalphaT}
    Let $U=S\times T$, where $S$ is a $(k-r)$-dimensional $\fq$-subspace of $\fqn$ with $r \leq k-r$ and $T$ is an $r$-dimensional $\F_q$-subspace of $\F_{q^n}$.
    For any point $\la (1,\alpha)\ra_{\fqn}$, with $\alpha \in \fqn^*$,
    \[ w_{L_U}(\la (1,\alpha)\ra_{\fqn})=\dim_{\fq}(S\cap \alpha^{-1}T). \]
\end{lemma}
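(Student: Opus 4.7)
The plan is to unwind the definition of the weight of a point in a linear set and exploit the product structure of $U$ directly. By definition, $w_{L_U}(\la (1,\alpha)\ra_{\fqn})$ equals $\dim_{\fq}\bigl(\la (1,\alpha)\ra_{\fqn}\cap U\bigr)$, so the task is to compute this intersection explicitly.

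First, I would parametrise a generic vector in the $\fqn$-line $\la (1,\alpha)\ra_{\fqn}$ as $(\lambda,\lambda\alpha)$ with $\lambda\in\fqn$. Such a vector lies in $U=S\times T$ precisely when the two conditions $\lambda\in S$ and $\lambda\alpha\in T$ hold simultaneously. Since $\alpha\in\fqn^*$, the second condition can be rewritten as $\lambda\in\alpha^{-1}T$, and consequently
\[
\la (1,\alpha)\ra_{\fqn}\cap U=\bigl\{(\lambda,\lambda\alpha)\colon \lambda\in S\cap \alpha^{-1}T\bigr\}.
\]

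Next, I would observe that the map $\lambda\mapsto (\lambda,\lambda\alpha)$ is an $\fq$-linear isomorphism from $S\cap\alpha^{-1}T$ onto this intersection, so the two spaces have the same $\fq$-dimension. Combining this with the definition of weight yields the claimed equality. There is essentially no obstacle in this argument; the only point worth stressing is that the hypothesis $\alpha\ne 0$ is what allows us to invert $\alpha$ and interpret $\alpha^{-1}T$ as a genuine $\fq$-subspace of $\fqn$, so the right-hand side of the statement is well-defined.
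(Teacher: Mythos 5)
Your argument is correct and is essentially identical to the paper's proof: both parametrise the line as $\{\rho(1,\alpha)\colon\rho\in\fqn\}$, note that $\rho(1,\alpha)\in S\times T$ if and only if $\rho\in S\cap\alpha^{-1}T$, and identify the weight with $\dim_{\fq}(S\cap\alpha^{-1}T)$. You merely make explicit the $\fq$-linear isomorphism $\rho\mapsto\rho(1,\alpha)$ that the paper leaves implicit.
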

\begin{proof}
    By definition, we have that
    \[ w_{L_U}(\la (1,\alpha)\ra_{\fqn})=\dim_{\fq}(\{\rho \in \fqn \colon \rho(1,\alpha)\in S\times T\}), \]
    and noting that $\rho(1,\alpha)\in S\times T$ if and only if $\rho \in S\cap \alpha^{-1}T$, we obtain the assertion.
\end{proof}

As a consequence, when we have a polynomial description of $S$ and $T$, we have the following.

\begin{corollary}\label{cor:weightscomplpoli}
    Let $n=2t$, $\xi,\eta \in \mathbb{F}_{q^n}\setminus\mathbb{F}_{q^t}$ and $f(X),g(X) \in\mathcal{L}_{t,q}$.
    Let $\xi^2=A\xi+B$ and $\eta=a\xi+b$, for some $A,B,a,b \in \mathbb{F}_{q^t}$.
    The weight of $\la (1,\alpha) \ra_{\fqn}$, with $\alpha^{-1}=\alpha_0+\xi\alpha_1$, in $L_{S_{f,\xi}\times S_{g,\eta}}$ is 
 
 \[ \dim_{\fq}(\ker\left(f(\alpha_0 X+(\alpha_0 b+a\alpha_1B)g(X))-\alpha_1 X-(a\alpha_0+ a\alpha_1 A+b\alpha_1) g(X)\right)). \]  
    
\end{corollary}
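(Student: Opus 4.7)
The plan is to apply Lemma~\ref{prop:weightpointSalphaT} to reduce the computation of the weight to the dimension of the intersection $S_{f,\xi}\cap \alpha^{-1}S_{g,\eta}$, and then use the $\mathbb{F}_{q^t}$-basis $\{1,\xi\}$ of $\mathbb{F}_{q^{2t}}$ to translate this intersection into a single $\mathbb{F}_q$-linear equation over $\mathbb{F}_{q^t}$.

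First, using that $\eta=a\xi+b$ with $a,b\in\mathbb{F}_{q^t}$, a generic element of $\alpha^{-1}S_{g,\eta}$ can be written as
\[
(\alpha_0+\xi\alpha_1)\bigl(v+(a\xi+b)g(v)\bigr),\qquad v\in\mathbb{F}_{q^t}.
\]
I would expand this product and use $\xi^2=A\xi+B$ to eliminate the $\xi^2$ term, thereby writing the element uniquely in the form $P(v)+\xi Q(v)$ with $P(v),Q(v)\in\mathbb{F}_{q^t}$. A direct computation yields
\[
P(v)=\alpha_0 v+(\alpha_0 b+a\alpha_1 B)g(v),\qquad Q(v)=\alpha_1 v+(a\alpha_0+a\alpha_1 A+b\alpha_1)g(v).
\]

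Next, a generic element of $S_{f,\xi}$ has the form $u+\xi f(u)$ with $u\in\mathbb{F}_{q^t}$, which is already split along the basis $\{1,\xi\}$. Since $\{1,\xi\}$ is an $\mathbb{F}_{q^t}$-basis of $\mathbb{F}_{q^{2t}}$, the condition that $u+\xi f(u)=P(v)+\xi Q(v)$ is equivalent to the pair of equations $u=P(v)$ and $f(u)=Q(v)$. Substituting the first into the second eliminates $u$, leaving the single $\mathbb{F}_q$-linear condition
\[
f\bigl(\alpha_0 v+(\alpha_0 b+a\alpha_1 B)g(v)\bigr)=\alpha_1 v+(a\alpha_0+a\alpha_1 A+b\alpha_1)g(v),
\]
i.e.\ $v$ lies in the kernel of the $\mathbb{F}_q$-linearised polynomial appearing in the statement.

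To conclude, I would observe that the map $v\mapsto u+\xi f(u)$ with $u=P(v)$ is an $\mathbb{F}_q$-linear injection of the solution set $v\in\mathbb{F}_{q^t}$ into $S_{f,\xi}\cap \alpha^{-1}S_{g,\eta}$, because $v$ can be recovered from the $\xi$-component together with the $1$-component (or directly from the fact that $v\mapsto P(v)+\xi Q(v)$ is itself injective, as its image lies in $\alpha^{-1}S_{g,\eta}$ which has $\mathbb{F}_q$-dimension $t$ and is parametrised by $v$). Hence the dimension of the intersection equals the $\mathbb{F}_q$-dimension of the kernel above, and by Lemma~\ref{prop:weightpointSalphaT} this is the claimed weight. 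The only mildly delicate step is the bookkeeping in the expansion using $\xi^2=A\xi+B$, but this is purely computational.
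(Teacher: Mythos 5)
Your proposal is correct and follows essentially the same route as the paper: reduce via Lemma~\ref{prop:weightpointSalphaT} to computing $\dim_{\fq}(S_{f,\xi}\cap\alpha^{-1}S_{g,\eta})$, expand $(\alpha_0+\xi\alpha_1)(v+\eta g(v))$ along the $\mathbb{F}_{q^t}$-basis $\{1,\xi\}$ using $\xi^2=A\xi+B$, and substitute the $1$-component into $f$ to obtain the stated kernel. Your explicit check that $v\mapsto P(v)+\xi Q(v)$ is an injection identifying the solution set with the intersection is a point the paper leaves implicit, but otherwise the two arguments coincide.
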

\begin{proof}
    By the above proposition, we need to compute the dimension of $S_{f,\xi}\cap \alpha^{-1} S_{g,\eta}$.
    Then $x_0+\xi f(x_0) \in \alpha^{-1} S_{g,\eta}$ if and only if there exists $y_0 \in \F_{q^t}$ such that

     \[ x_0+\xi f(x_0)=(\alpha_0+\xi\alpha_1)(y_0+bg(y_0)+a\xi g(y_0)). \]
    Using that $\{1,\xi\}$ is an $\F_{q^t}$-basis of $\fqn$ we get that

     \[\left\{
    \begin{array}{ll}
    x_0=\alpha_0 y_0+\alpha_0 b g(y_0)+a g(y_0)\alpha_1B,\\
    f(x_0)= \alpha_0 a g(y_0)+\alpha_1 y_0+a g(y_0) \alpha_1 A+\alpha_1 b  g(y_0),
    \end{array}
    \right.\]
    from which we get that the weight of $\langle (1,\alpha)\rangle_{\F_{q^n}}$ is \[ \dim_{\fq}(\ker(f(\alpha_0 X)+f(\alpha_0 b g(X))+f(a g(X)\alpha_1B)-\alpha_0 a g(X)-\alpha_1 X-a g(X) \alpha_1 A-\alpha_1 b  g(X))). \] 
    Using that $f$ is additive, the statement now follows.
\end{proof}

\begin{remark}\label{eta=xi}
    If $\eta=\xi$ the polynomial in Corollary \ref{cor:weightscomplpoli} reduces to

     \[ f(\alpha_0 X+ \alpha_1Bg(X))-(\alpha_0+\alpha_1A) g(X)-  \alpha_1 X. \] 
\end{remark}

\section{Explicit linear sets of complementary type and their weight distributions}
In what follows, we will use the criteria derived in the previous section to present linear sets for which we can control the weight distribution.
\subsection{\texorpdfstring{ $S_f\times S_\Tr$}{f x Tr}}
\begin{proposition}\label{prop:tracewithf}
    Let $n=2t$, $\xi \in \mathbb{F}_{q^n}\setminus\mathbb{F}_{q^t}$ and $f(X) \in\mathcal{L}_{t,q}$. Then the point $\la (1,\alpha) \ra_{\fqn}$ of $L_{S_{f,\xi}\times S_{\mathrm{Tr}_{q^t/q},\xi}}$ has weight at most 
    \[\dim_{\fq}(\mathrm{Im}(f))+1\]
    if $\alpha\notin \F_{q^t}$ and exactly 
    % \[\dim_{\fq}(\ker(f(\alpha_0 X)-\alpha_0 \mathrm{Tr}_{q^t/q}(X))),\]
    \[\dim_{\fq}\left(\ker\left(f\left(\frac{1}{\alpha} X\right)-\frac{1}{\alpha} \mathrm{Tr}_{q^t/q}(X)\right)\right),\]
    if  $\alpha\in \F_{q^t}^*$.
\end{proposition}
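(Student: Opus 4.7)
The plan is to invoke Remark \ref{eta=xi} in the case $g=\mathrm{Tr}_{q^t/q}$, so the weight computation reduces to the $\F_q$-dimension of the kernel of
\[ P(X)=f(\alpha_0 X+\alpha_1 B\,\mathrm{Tr}_{q^t/q}(X))-(\alpha_0+\alpha_1 A)\,\mathrm{Tr}_{q^t/q}(X)-\alpha_1 X, \]
where $\alpha^{-1}=\alpha_0+\xi\alpha_1$ in the $\F_{q^t}$-basis $\{1,\xi\}$ of $\F_{q^n}$, and $A,B\in\F_{q^t}$ are as in the corollary. Then I would split on whether $\alpha\in\F_{q^t}$ (i.e., $\alpha_1=0$) or not.

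In the easy case $\alpha\in\F_{q^t}^*$, the coefficient $\alpha_1$ vanishes and $\alpha_0=1/\alpha$, so $P(X)$ collapses immediately to $f(X/\alpha)-\mathrm{Tr}_{q^t/q}(X)/\alpha$, yielding the exact dimension formula stated.

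In the case $\alpha\notin\F_{q^t}$ we have $\alpha_1\ne 0$, and the key observation is a range argument: any $x_0\in\ker(P)$ satisfies
\[ \alpha_1 x_0 \;=\; f\bigl(\alpha_0 x_0+\alpha_1 B\,\mathrm{Tr}_{q^t/q}(x_0)\bigr)\;-\;(\alpha_0+\alpha_1 A)\,\mathrm{Tr}_{q^t/q}(x_0). \]
The first term on the right lies in $\mathrm{Im}(f)$, while the second lies in the one-dimensional $\F_q$-subspace $\F_q\cdot(\alpha_0+\alpha_1 A)$ (using that $\mathrm{Tr}_{q^t/q}$ takes values in $\F_q$). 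Hence $\ker(P)\subseteq\alpha_1^{-1}\bigl(\mathrm{Im}(f)+\F_q\cdot(\alpha_0+\alpha_1 A)\bigr)$, an $\F_q$-subspace of dimension at most $\dim_{\F_q}(\mathrm{Im}(f))+1$, which gives the upper bound.

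There is no real obstacle here: once Remark \ref{eta=xi} is applied, the argument is a short $\F_q$-linear algebra observation about where the right-hand side can land. The only mild point to be careful about is making sure the roles of $\alpha_0,\alpha_1$ are read from $\alpha^{-1}$, not $\alpha$, when translating between Corollary \ref{cor:weightscomplpoli} and the present statement.
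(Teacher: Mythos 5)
Your proposal is correct and follows essentially the same route as the paper: apply Remark \ref{eta=xi} with $g=\mathrm{Tr}_{q^t/q}$, observe the collapse to $f(X/\alpha)-\mathrm{Tr}_{q^t/q}(X)/\alpha$ when $\alpha_1=0$, and for $\alpha_1\ne 0$ confine $\alpha_1\cdot\ker$ to the sum of $\mathrm{Im}(f)$ and a one-dimensional $\F_q$-subspace. The only cosmetic difference is that the paper first expands $f(\alpha_0 X+\alpha_1 B\,\mathrm{Tr}_{q^t/q}(X))=f(\alpha_0 X)+\mathrm{Tr}_{q^t/q}(X)f(\alpha_1 B)$ by $\F_q$-linearity and absorbs $f(\alpha_1 B)$ into the one-dimensional summand, whereas you keep the whole $f$-term inside $\mathrm{Im}(f)$; both yield the same bound.
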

\begin{proof}
    Let $\xi^2=A\xi+B$, for some $A,B \in \mathbb{F}_{q^t}$ and let $\alpha^{-1}=\alpha_0+\xi\alpha_1$.
    By Corollary \ref{cor:weightscomplpoli} and Remark \ref{eta=xi} we need to check the dimension of the kernel of the following polynomial
    \[G(X)=f(\alpha_0 X+\alpha_1B\mathrm{Tr}_{q^t/q}(X))-(\alpha_0+\alpha_1A) \mathrm{Tr}_{q^t/q}(X)-\alpha_1 X,\]
    for $\alpha_0,\alpha_1 \in \F_{q^t}$.
    Using that the image of $\Tr_{q^t/q}$ is equal to $\fq$, it follows that an element $\beta \in \ker(G)$ if and only if 
    \[ \alpha_1\beta=f(\alpha_0 \beta)+\mathrm{Tr}_{q^t/q}(\beta)(f(\alpha_1B)-\alpha_0 - \alpha_1 A), \]
    therefore 
    \[\alpha_1 \beta \in \mathrm{Im}(f(\alpha_0 X))+\epsilon \fq,\]
    where $\epsilon=f(\alpha_1B)-\alpha_0 - \alpha_1 A$.
    Hence, if $\alpha_1 \ne 0$ then the weight of $\la (1,\alpha) \ra_{\fqn}$, with $\alpha^{-1}=\alpha_0+\xi\alpha_1$, in $L_{S_{f,\xi}\times S_{\mathrm{Tr}_{q^t/q},\xi}}$ is at most $\dim_{\fq}(\mathrm{Im}(f))+1$. If $\alpha_1=0$ then 
    \[G(X)=f(\alpha_0 X)-\alpha_0 \mathrm{Tr}_{q^t/q}(X),\]
    and the assertion follows.
\end{proof}

\begin{lemma}\label{lem:tracewithinvertible}
    Let $n=2t$, $\xi \in \mathbb{F}_{q^n}\setminus\mathbb{F}_{q^t}$, $f(x) \in\mathcal{L}_{t,q}$, and $\alpha^{-1}=\alpha_0+\xi\alpha_1$. Suppose that $\psi_{\alpha_0,\alpha_1}(X):=f(\alpha_0 X)-\alpha_1X$ is invertible, then the point $\la (1,\alpha) \ra_{\fqn}$ of $L_{S_{f,\xi}\times S_{\mathrm{Tr}_{q^t/q},\xi}}$ has weight  
    at most one. 
\end{lemma}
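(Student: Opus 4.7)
The plan is to leverage the polynomial description of the weight coming from Corollary~\ref{cor:weightscomplpoli} (or equivalently, the polynomial $G$ appearing in the proof of Proposition~\ref{prop:tracewithf}) and use invertibility of $\psi_{\alpha_0,\alpha_1}$ to pin the kernel down to at most a single $\F_q$-line.

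First, I would write out the polynomial whose kernel measures the weight. With $\xi^2=A\xi+B$ and $\alpha^{-1}=\alpha_0+\xi\alpha_1$, this polynomial is
\[
G(X)=f(\alpha_0 X+\alpha_1 B\,\mathrm{Tr}_{q^t/q}(X))-(\alpha_0+\alpha_1 A)\,\mathrm{Tr}_{q^t/q}(X)-\alpha_1 X.
\]
The key observation is that $\mathrm{Tr}_{q^t/q}(X)\in\F_q$, so by $\F_q$-linearity of $f$ we may split
\[
f(\alpha_0 X+\alpha_1 B\,\mathrm{Tr}_{q^t/q}(X))=f(\alpha_0 X)+\mathrm{Tr}_{q^t/q}(X)\,f(\alpha_1 B).
\]
Substituting, $G(X)=\psi_{\alpha_0,\alpha_1}(X)+c\,\mathrm{Tr}_{q^t/q}(X)$, where $c:=f(\alpha_1 B)-(\alpha_0+\alpha_1 A)\in\F_{q^t}$.

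Next, suppose $\beta\in\ker G$. Then $\psi_{\alpha_0,\alpha_1}(\beta)=-c\,\mathrm{Tr}_{q^t/q}(\beta)$. Since $\mathrm{Tr}_{q^t/q}(\beta)\in\F_q$ and $\psi_{\alpha_0,\alpha_1}$ is $\F_q$-linear and invertible by hypothesis, $\beta$ is determined by the scalar $\mathrm{Tr}_{q^t/q}(\beta)$: explicitly, setting $\beta_0:=\psi_{\alpha_0,\alpha_1}^{-1}(c)$, we obtain $\beta=-\mathrm{Tr}_{q^t/q}(\beta)\,\beta_0$. Hence $\ker G\subseteq\F_q\beta_0$, which already has dimension at most one over $\F_q$. (If $c=0$ the conclusion is even stronger: the kernel is trivial.)

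Finally, applying $\mathrm{Tr}_{q^t/q}$ to $\beta=-\mathrm{Tr}_{q^t/q}(\beta)\,\beta_0$ yields the consistency condition $\mathrm{Tr}_{q^t/q}(\beta)\bigl(1+\mathrm{Tr}_{q^t/q}(\beta_0)\bigr)=0$, so either $\mathrm{Tr}_{q^t/q}(\beta_0)=-1$ (in which case $\ker G=\F_q\beta_0$, of dimension one) or $\mathrm{Tr}_{q^t/q}(\beta)=0$ and hence $\beta=0$. Either way, by Proposition~\ref{prop:tracewithf}, the weight of $\la (1,\alpha)\ra_{\fqn}$ is at most one. There is no genuine obstacle here; the only subtle point is the bookkeeping that allows one to pass from the three-term expression for $G(X)$ to the clean form $\psi_{\alpha_0,\alpha_1}(X)+c\,\mathrm{Tr}_{q^t/q}(X)$, after which invertibility of $\psi_{\alpha_0,\alpha_1}$ does all the work.
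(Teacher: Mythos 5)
Your proposal is correct and follows essentially the same route as the paper: both rewrite $G(X)$ as $\psi_{\alpha_0,\alpha_1}(X)$ plus a scalar multiple of $\mathrm{Tr}_{q^t/q}(X)$ and then use invertibility of $\psi_{\alpha_0,\alpha_1}$ to confine $\ker(G)$ to the preimage of a one-dimensional space. Your explicit parametrisation $\beta=-\mathrm{Tr}_{q^t/q}(\beta)\,\psi_{\alpha_0,\alpha_1}^{-1}(c)$ and the final consistency check are just a more detailed rendering of the paper's one-line conclusion.
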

\begin{proof}
    By Corollary \ref{cor:weightscomplpoli} and Remark \ref{eta=xi} we need to check the dimension of the kernel of $$G(X)=f(\alpha_0 X)-\alpha_1(X)+(f(\alpha_1B)-\alpha_0-\alpha_1A)\mathrm{Tr}_{q^t/q}(X),$$
 which we can rewrite as $G(X)=\psi_{\alpha_0,\alpha_1}(X)-\gamma \mathrm{Tr}_{q^t/q}(X)$ where we have assumed $\psi_{\alpha_0,\alpha_1}(X)$ to be invertible.
 Since $\mathrm{Im}(\gamma\mathrm{Tr}_{q^t/q}(X))$ is one-dimensional, and the elements of $\ker(G)$ are in $\psi_{\alpha_0,\alpha_1}^{-1}(\mathrm{Im}(\gamma\mathrm{Tr}_{q^t/q}(X)))$, we find that $\ker(G)$ is at most $1$-dimensional.
  \end{proof}

In the next result we consider the special case in which the polynomial $f$ is the trace function.

\begin{proposition}\label{prop:tracextrace}
    Let $n=2t$, $\xi \in \mathbb{F}_{q^n}\setminus\mathbb{F}_{q^t}$,  $q$ odd and $\xi^2=A\xi+B$ where $\mathrm{Tr}_{q^t/q}(A)\neq -2.$
    Then, for any point $\la (1,\alpha) \ra_{\fqn} \in L_{S_{\mathrm{Tr}_{q^t/q},\xi}\times S_{\mathrm{Tr}_{q^t/q},\xi}}$
    \[ w_{L_{S_{\mathrm{Tr}_{q^t/q},\xi}\times S_{\mathrm{Tr}_{q^t/q},\xi}}}(\la (1,\alpha) \ra_{\fqn})= \begin{cases}
        1, & \text{if } \alpha\notin \F_{q^t};\\
        t-2, & \text{if } \alpha\in \F_{q^t}\setminus \F_q;\\
        t, & \text{if } \alpha\in \F_q^*.
    \end{cases}\]
\end{proposition}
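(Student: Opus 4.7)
The plan is to apply Corollary \ref{cor:weightscomplpoli}, specialized via Remark \ref{eta=xi} to the choice $f=g=\Tr_{q^t/q}$, in order to reduce the weight computation to the $\F_q$-dimension of the kernel of an explicit $\F_q$-linear polynomial, and then to split into the three cases $\alpha \in \F_q^*$, $\alpha \in \F_{q^t}\setminus\F_q$, and $\alpha \notin \F_{q^t}$.

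Writing $\alpha^{-1} = \alpha_0+\xi\alpha_1$ and using that $\Tr(X) \in \F_q$, the polynomial supplied by Corollary \ref{cor:weightscomplpoli} simplifies to
$$G(X) \;=\; \Tr(\alpha_0 X) - \alpha_1 X + \epsilon\,\Tr(X), \qquad \epsilon := \Tr(\alpha_1 B) - \alpha_0 - \alpha_1 A,$$
and $w_{L_U}(\langle(1,\alpha)\rangle_{\F_{q^n}}) = \dim_{\F_q}\ker G$. For $\alpha \in \F_q^*$ one has $\alpha_0 = \alpha^{-1}\in\F_q$ and $\alpha_1=0$, so $\epsilon=-\alpha^{-1}$ and a direct substitution gives $G\equiv 0$, whence the weight equals $t$. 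For $\alpha \in \F_{q^t}\setminus\F_q$, still $\alpha_1=0$ but now $\alpha_0\notin \F_q$, so $G(X)=\Tr(\alpha^{-1}X)-\alpha^{-1}\Tr(X)$; since $\Tr(\alpha^{-1}X)\in\F_q$ while $\alpha^{-1}\Tr(X)\in\F_q$ only if $\Tr(X)=0$, the kernel equals the intersection $\ker\Tr \cap \alpha\ker\Tr$ of two distinct hyperplanes of $\F_{q^t}$ (distinctness follows from $\alpha^{-1}\notin\F_q$ and the non-degeneracy of the trace form), which has $\F_q$-dimension $t-2$.

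The substantive case is $\alpha \notin \F_{q^t}$, where $\alpha_1\neq 0$. The equation $G(X)=0$ rewrites as $\alpha_1 X = \Tr(\alpha_0 X)+\epsilon\,\Tr(X)$, so $\alpha_1 X$ lies in the $\F_q$-subspace $\F_q\cdot 1+\F_q\cdot\epsilon$, of $\F_q$-dimension at most two. If $\epsilon \in \F_q$, this subspace is one-dimensional and the weight is at most $1$ for free. If $\epsilon\notin\F_q$, parametrize $X = \alpha_1^{-1}(a+b\epsilon)$ with $a,b\in\F_q$ and plug into the defining relations $a=\Tr(\alpha_0 X)$, $b=\Tr(X)$; this produces the homogeneous $\F_q$-linear system whose coefficient matrix is
$$M \;=\; \begin{pmatrix} 1-\Tr(\mu) & -\Tr(\mu\epsilon) \\ -\Tr(\alpha_1^{-1}) & 1-\Tr(\alpha_1^{-1}\epsilon) \end{pmatrix}, \qquad \mu := \alpha_0\alpha_1^{-1},$$
and the weight equals $2-\rk(M)$.

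The only step in which the assumption $\Tr(A)\neq -2$ is used, and the main obstacle, is ruling out $\rk(M)=0$. Assume all four entries of $M$ vanish. Using $\alpha_1^{-1}\epsilon = \Tr(\alpha_1 B)\,\alpha_1^{-1} - \mu - A$ together with the $\F_q$-linearity of $\Tr$ and $\Tr(\alpha_1 B)\in\F_q$, one computes
$$\Tr(\alpha_1^{-1}\epsilon) \;=\; \Tr(\alpha_1 B)\,\Tr(\alpha_1^{-1}) - \Tr(\mu) - \Tr(A).$$
Plugging in $\Tr(\alpha_1^{-1})=0$, $\Tr(\mu)=1$ and $\Tr(\alpha_1^{-1}\epsilon)=1$ (the vanishing of the remaining entry) forces $\Tr(A)=-2$, contradicting the hypothesis. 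Hence $\rk(M)\geq 1$, so the weight is at most $1$; together with $\langle(1,\alpha)\rangle_{\F_{q^n}} \in L_U$ forcing weight at least $1$, the weight is exactly $1$, completing all three cases.
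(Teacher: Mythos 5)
Your proof is correct, and for the two cases $\alpha\in\F_q^*$ and $\alpha\in\F_{q^t}\setminus\F_q$ it coincides with the paper's argument. For the main case $\alpha\notin\F_{q^t}$ the paper takes a different route through the same polynomial $G$: it first invokes Proposition \ref{prop:tracewithf} to bound the weight by $\dim_{\F_q}(\mathrm{Im}(\mathrm{Tr}_{q^t/q}))+1=2$, then assumes a two-dimensional kernel and counts elements of $\ker(G)$ with prescribed trace --- showing that $\ker(G)\cap\ker(\mathrm{Tr}_{q^t/q})$ is at most one-dimensional, that the $q$ kernel elements of trace $1$ must all be of the form $\frac{\mu-\alpha_0}{\alpha_1}-A$, and that this forces $G(1/\alpha_1)=0$ --- before extracting the contradiction $\mathrm{Tr}_{q^t/q}(A)=-2$. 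Your reformulation of $\ker(G)$ as the solution space of a $2\times 2$ homogeneous system over $\F_q$ (after observing $\alpha_1\ker(G)\subseteq\F_q+\F_q\epsilon$) reaches the same contradiction more transparently: the vanishing of $M$ encodes exactly the conditions $\mathrm{Tr}_{q^t/q}(\alpha_0/\alpha_1)=1$ and $\mathrm{Tr}_{q^t/q}(1/\alpha_1)=0$ that the paper derives by counting, and your identity for $\mathrm{Tr}_{q^t/q}(\alpha_1^{-1}\epsilon)$ then yields $\mathrm{Tr}_{q^t/q}(A)=-2$ directly. The linear-algebra packaging also cleanly isolates the degenerate subcase $\epsilon\in\F_q$, which the paper handles only implicitly. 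Both arguments are complete; yours is arguably the tidier bookkeeping of the same underlying obstruction.
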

\begin{proof}
First suppose that $\alpha\notin \F_{q^t}$. Since $\dim_{\fq}(\mathrm{Im}(\mathrm{Tr}_{q^t/q}))=1$, it follows from Proposition \ref{prop:tracewithf} that the weight of a point $\langle(1,\alpha)\rangle$ with $\alpha\notin \F_{q^t}$ is at most $2$. Now assume to the contrary that there is a choice of $\alpha^{-1}=\alpha_0+\xi\alpha_1$ where $\alpha_1\neq 0$ such that $\langle(1,\alpha)\rangle$ has weight $2$. In view of Remark \ref{eta=xi}, this means that $\dim_{\F_q}(\ker(G))$ is exactly two, where \[G(X)=\mathrm{Tr}_{q^t/q}(\alpha_0X)+\Tr(\alpha_1B)\mathrm{Tr}_{q^t/q}(X)-(\alpha_0+\alpha_1A)\mathrm{Tr}_{q^t/q}(X)-\alpha_1X.\]

We first see that there are at most $q$ elements in $\ker(G)\cap T_0$, where $T_0$ is the set of elements $x$ with $\mathrm{Tr}_{q^t/q}(x)=0$: if $\mathrm{Tr}_{q^t/q}(x_0)=0$ and $G(x_0)=0$ then $\mathrm{Tr}_{q^t/q}(\alpha_0x_0)=\alpha_1x_0$, and therefore, $x_0=\frac{\lambda}{\alpha_1}$ for some $\lambda\in \F_q$. Therefore, if $\ker(G)$ is $2$-dimensional, there are at least $q^2-q=q(q-1)$ elements $x\in \ker(G)$ with $\mathrm{Tr}_{q^t/q}(x)\neq 0$, and therefore, since $\ker(G)$ is an $\F_q$-subspace, and $\mathrm{Tr}_{q^t/q}(\lambda x)=\lambda\mathrm{Tr}_{q^t/q}(x)$ for $\lambda\in \F_q$, there at least $q$ elements in $\ker(G)$ with $\mathrm{Tr}_{q^t/q}(x)=1$.
Now let $x_1$ and $x_2$ be two distinct elements with $\mathrm{Tr}_{q^t/q}(x_1)=\mathrm{Tr}_{q^t/q}(x_2)=1$ and $G(x_1)=G(x_2)=0$. It follows that $x_1-x_2\in \ker(G)\cap T_0$, and therefore, $x_1-x_2=\frac{\lambda}{\alpha_1}$ for some $\lambda\in \F_q$. It follows that $\mathrm{Tr}_{q^t/q}(\frac{1}{\alpha_1})=0.$
Since $\mathrm{Tr}_{q^t/q}(\alpha_0x_1)-\alpha_0-\alpha_1A+\mathrm{Tr}_{q^t/q}(\alpha_1B)=\alpha_1x_1$, we also see that $x_1=\frac{\mu-\alpha_0}{\alpha_1}-A$ for some $\mu\in \F_q$. 
Since there are at least $q$ different $x_1$'s and all of them are of the form $\frac{\mu-\alpha_0}{\alpha_1}-A,$ we find that all elements $r_\mu=\frac{\mu-\alpha_0}{\alpha_1}-A$ need to be in $\ker(G).$

It follows that $G(r_0)=0$, so $G(\frac{-\alpha_0}{\alpha_1})=G(A)$ 
but also $G(r_1)=0$, so $G(\frac{1-\alpha_0}{\alpha_1})=G(A)$, which forces $G(\frac{1}{\alpha_1})=0.$
Therefore, \[\mathrm{Tr}_{q^t/q}\left(\frac{\alpha_0}{\alpha_1}\right)+\mathrm{Tr}_{q^t/q}(\alpha_1B)\mathrm{Tr}_{q^t/q}\left(\frac{1}{\alpha_1}\right)-(\alpha_0+\alpha_1A)\mathrm{Tr}_{q^t/q}\left(\frac{1}{\alpha_1}\right)-1=0.\]
Hence, since $\mathrm{Tr}_{q^t/q}(\frac{1}{\alpha_1})=0$, $\mathrm{Tr}_{q^t/q}(\frac{\alpha_0}{\alpha_1})=1$. Since $x_1=\frac{\mu-\alpha_0}{\alpha_1}-A$ has $\mathrm{Tr}_{q^t/q}(x_1)=1$, it follows that $\mathrm{Tr}_{q^t/q}(A)=-2$, a contradiction.

    Now, suppose that $\alpha \in \F_{q^t}^*$. 
    Again, using Proposition \ref{prop:tracewithf}, we find that the weight of $\la (1,\alpha) \ra_{\fqn}$ is the dimension of the kernel of $$G(X)=\mathrm{Tr}_{q^t/q}\left(\frac{1}{\alpha} X\right)-\frac{1}{\alpha} \mathrm{Tr}_{q^t/q}(X).$$ If $\alpha\in \F_q^*$, this polynomial is identically zero so $\dim(\ker(G))=t$. 
    We may suppose that $\alpha\in \F_{q^t}\setminus \F_q$. Consider  $\beta\in \ker(G)$, then we have that $\mathrm{Tr}_{q^t/q}(\frac{1}{\alpha}\beta)=\frac{1}{\alpha}\mathrm{Tr}_{q^t/q}(\beta)$. The left hand side of this equality belongs to $\F_q$, hence, since $\alpha\notin \F_q$, $\mathrm{Tr}_{q^t/q}(\beta)=0=\mathrm{Tr}_{q^t/q}(\frac{1}{\alpha}\beta)$. It follows that $\ker(G)\subseteq\ker(\mathrm{Tr}_{q^t/q}(X))\cap \ker(\mathrm{Tr}_{q^t/q}(\frac{1}{\alpha}X))$, and it is clear that $\ker(\mathrm{Tr}_{q^t/q}(X))\cap \ker(\mathrm{Tr}_{q^t/q}(\frac{1}{\alpha}X))\subseteq \ker(G)$. Since $\ker(\frac{1}{\alpha}\mathrm{Tr}_{q^t/q}(X))$ and $\ker(\mathrm{Tr}_{q^t/q}(\frac{1}{\alpha}X))$ are two distinct $(t-1)$-dimensional $\F_q$-subspaces of $\F_{q^t}$, their intersection is $(t-2)$-dimensional.
\end{proof}

In the next result we see how the weight distribution dramatically changes when considering $f(X)=X^q$.

\begin{proposition}\label{lem:tracewithxq}
    Let $n=2t$, $\xi \in \mathbb{F}_{q^n}\setminus\mathbb{F}_{q^t}$. Then, for any $\alpha \in \fqn$ 
    \[ 
    w_{L_{S_{x^q,\xi}\times S_{\mathrm{Tr}_{q^t/q},\xi}}} (\la (1,\alpha) \ra_{\fqn})\leq 
    \begin{cases}
        1, & \text{if } \alpha\in \F_{q^t}^*;\\
        2, & \text{if } \alpha\in \F_{q^n}^*.
    \end{cases}
    \] 
\end{proposition}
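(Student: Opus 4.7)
The plan is to apply Corollary~\ref{cor:weightscomplpoli} in the form of Remark~\ref{eta=xi} (since here $\eta=\xi$), specialising $f(X)=X^q$ and $g(X)=\Tr_{q^t/q}(X)$, and then to analyse the kernel of the resulting $\fq$-linear polynomial $G$ directly in both cases. Writing $\alpha^{-1}=\alpha_0+\xi\alpha_1$ and $\xi^2=A\xi+B$, and using that $\Tr_{q^t/q}(X)\in \fq$ so that $\Tr_{q^t/q}(X)^q=\Tr_{q^t/q}(X)$, the substitution gives
\[
G(X)=\alpha_0^q X^q-\alpha_1 X+c\,\Tr_{q^t/q}(X),\qquad c:=\alpha_1^q B^q-\alpha_0-\alpha_1 A\in\F_{q^t}.
\]
By Lemma~\ref{prop:weightpointSalphaT} and the derivation in Corollary~\ref{cor:weightscomplpoli}, the weight of $\langle(1,\alpha)\rangle_{\fqn}$ equals $\dim_{\fq}\ker(G)$.

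For the first case $\alpha\in\F_{q^t}^*$, we have $\alpha_1=0$ and $\alpha_0=1/\alpha\neq 0$; the formula collapses to $G(X)=\alpha_0^q X^q-\alpha_0\Tr_{q^t/q}(X)$, so $G(X)=0$ is equivalent to $X^q=\alpha_0^{1-q}\Tr_{q^t/q}(X)$. If $X_1,X_2\in\ker(G)$ have the same trace, then $X_1^q=X_2^q$, and since Frobenius is injective on $\F_{q^t}$ we get $X_1=X_2$. Thus the restriction $\Tr_{q^t/q}|_{\ker G}$ is injective into the $1$-dimensional $\fq$-space $\fq$, giving $\dim_{\fq}\ker(G)\leq 1$ as required.

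For the second case $\alpha\in\fqn^*$ (so possibly $\alpha_1\neq 0$), the same device works after one more step. If $X_1,X_2\in\ker(G)$ satisfy $\Tr_{q^t/q}(X_1)=\Tr_{q^t/q}(X_2)$, then $Y:=X_1-X_2$ lies in the kernel of the $\fq$-linear $q$-polynomial $\alpha_0^q Y^q-\alpha_1 Y$, whose kernel is a $\fq$-subspace of $\F_{q^t}$ of dimension at most $1$ (it has at most $q$ roots, as a polynomial of degree $q$; note this also holds in the degenerate cases $\alpha_0=0$ or $\alpha_1=0$). Since $\mathrm{Im}(\Tr_{q^t/q}|_{\ker G})\subseteq \fq$ is at most $1$-dimensional, rank-nullity applied to $\Tr_{q^t/q}|_{\ker G}$ yields $\dim_{\fq}\ker(G)\leq 1+1=2$.

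I do not foresee a serious obstacle: the only care-points are the bookkeeping in expanding $(\alpha_0 X+\alpha_1 B\Tr_{q^t/q}(X))^q$ and the use of $\Tr_{q^t/q}(X)\in\fq$ to collapse $\Tr_{q^t/q}(X)^q$ back to $\Tr_{q^t/q}(X)$. The sharper bound $1$ in the first case really does require the special structure $\alpha_1=0$, so the general computation through the $q$-polynomial $\alpha_0^q Y^q-\alpha_1 Y$ (which yields only $\leq 2$) needs to be complemented by the specialised argument above.
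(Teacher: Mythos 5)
Your proof is correct and follows essentially the same route as the paper: both reduce to the kernel of $G(X)=\alpha_0^qX^q-\alpha_1X+c\,\mathrm{Tr}_{q^t/q}(X)$ via Corollary~\ref{cor:weightscomplpoli} and Remark~\ref{eta=xi}, and both bound $\dim_{\fq}\ker(G)$ by combining the one-dimensional image of the trace term with the at most one-dimensional kernel of $\alpha_0^qX^q-\alpha_1X$ (your rank--nullity phrasing is just the paper's ``$q$-to-$1$'' counting in different words). The only discrepancy is the constant in front of $\mathrm{Tr}_{q^t/q}(X)$, where your $c=\alpha_1^qB^q-\alpha_0-\alpha_1A$ is the correct expansion and is immaterial to the argument.
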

\begin{proof}
First suppose that $\alpha\in \F_{q^t}^*.$ We know from Proposition \ref{prop:tracewithf} that we need to determine $\dim_{\fq} (\ker(H))$ where \[H(X)=\left(\frac{1}{\alpha} X\right)^q-\frac{1}{\alpha} \mathrm{Tr}_{q^t/q}(X).\]
If $x\in \ker(H)$, then $\alpha^{1-q}x^q=\mathrm{Tr}_{q^t/q}(x)\in \fq.$ It immediately follows that $\dim(\ker(H_\alpha))\leq 1.$
  Now suppose that $\alpha\notin\F_{q^t}.$  We know from Corollary \ref{cor:weightscomplpoli} that we need to determine the dimension of $\ker(G(X))$ where
    $$G(X)=\alpha_0^qX^q-\alpha_1 X+(\alpha_1^qB^q-\alpha_0-\alpha_1^qA^q)\Tr_{q^t/q}(X),$$ and $\alpha^{-1}=\alpha_0+\xi\alpha_1.$

We will show that for all choices of $(\alpha_0,\alpha_1)$, different from $(0,0)$, $\ker(G)$ is at most $2$-dimensional.  Write $G(X)=\psi_{\alpha_0,\alpha_1}(X)-\gamma \mathrm{Tr}_{q^t/q}(X)$, where $\psi_{\alpha_0,\alpha_1}(X)=\alpha_0^qX^q-\alpha_1X$ and $\gamma=\alpha_1^qB^q-\alpha_0-\alpha_1^qA^q.$

 We know from Lemma \ref{lem:tracewithinvertible} that if $\psi_{\alpha_0,\alpha_1}(X)$ is invertible, then   $\ker(G)$ is at most $1$-dimensional so the associated point $\langle(1,\alpha)\rangle_{\F_{q^n}}$, has weight at most $1$. 
 Note that if $\alpha_0=0$, $\psi_{\alpha_0,\alpha_1}(X)$ is invertible, so if $\psi_{\alpha_0,\alpha_1}$ is not invertible, it is a $q$-polynomial with $q$-degree $1$, and therefore it has exactly $q$ roots and $\dim_{\fq}(\ker(\psi_{\alpha_0,\alpha_1}))=1$.% and $\dim(\mathrm{im}(\psi_{\alpha_0,\alpha_1}))=t-1$.
 
Since $\psi_{\alpha_0,\alpha_1}(x)=\gamma \Tr_{q^t/q}(x)\in \gamma \F_q$ for all $x\in \ker(G)$, and $\psi_{\alpha_0,\alpha_1}$ is $q$-to-$1$, there are at most $q^2$ elements in $\ker(G),$ so $\dim_{\fq}(\ker(G))\leq 2.$
 \end{proof}

\subsection{\texorpdfstring{$S_{x^{q^s}}\times S_{x^{q^s}}$}{frob x frob}}

In this section, we study the structure and weight distribution of certain linear sets of the form 
$L_{S_{f,\xi} \times S_{f,\xi}}$ in $\PG(1,q^n)$, where $f$ is either monomial or binomial. 
In the following propositions, we use the fact that if a polynomial defined on $\F_{q^{t}}$ of the form $a_0x+a_1x^{q^s}+a_2 x^{q^{2s}}+\ldots+a_{k-1}x^{q^{s(k-1)}}-x^{q^{sk}}$ splits completely then 
\begin{equation}\label{eq:normcond}
\mathrm{N}_{q^t/q}(a_0)=(-1)^{t(k+1)}
\end{equation} 
(see e.g. \cite[Corollary 3.1]{csajbokpolynomial}). 

\begin{proposition}\label{prop:powerqxpowerq}
    Let $n=2t$, $\xi \in \mathbb{F}_{q^n}\setminus\mathbb{F}_{q^t}$ and $s \in \mathbb{N}$ with $\gcd(s,t)=1$. Then the point $\la (1,\alpha) \ra_{\fqn}$ of $L_{S_{x^{q^s},\xi}\times S_{x^{q^s},\xi}}$ has weight at most two if $\alpha\notin \F_{q^t}$; has weight $0$ if $\alpha \in \F_{q^t}\setminus\fq$ and has weight $t$ if $\alpha \in \fq^*$.

   Furthermore, if we let $\xi^2=A\xi+B$, for some $A,B \in \mathbb{F}_{q^t}$, and 
    if $\mathrm{N}_{q^t/q}(B)\neq(-1)^t$, then all points $\la (1,\alpha) \ra_{\fqn}$, $\alpha\notin \F_{q^t}$ of $L_{S_{x^{q^s},\xi}\times S_{x^{q^s},\xi}}$ have weight one. 
\end{proposition}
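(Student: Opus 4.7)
The plan is to invoke Corollary~\ref{cor:weightscomplpoli}, specialised by Remark~\ref{eta=xi} since here $\eta=\xi$, with $f(X)=g(X)=X^{q^s}$. Writing $\alpha^{-1}=\alpha_0+\xi\alpha_1$ with $\alpha_0,\alpha_1\in\F_{q^t}$, the weight of $\la(1,\alpha)\ra_{\fqn}$ equals the $\fq$-dimension of the kernel in $\F_{q^t}$ of
\[
G(X)=\alpha_1^{q^s}B^{q^s}X^{q^{2s}}+(\alpha_0^{q^s}-\alpha_0-\alpha_1 A)X^{q^s}-\alpha_1 X.
\]
I would first observe that $B\ne 0$: otherwise $\xi^2=A\xi$ would force $\xi=A\in\F_{q^t}$, contradicting $\xi\notin\F_{q^t}$.

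The cases with $\alpha\in\F_{q^t}$ are handled by direct inspection. If $\alpha\in\fq^*$, then $\alpha_1=0$ and $\alpha_0\in\fq$, so $\alpha_0^{q^s}=\alpha_0$ and $G\equiv 0$, yielding weight~$t$. If $\alpha\in\F_{q^t}\setminus\fq$, then $\alpha_1=0$ and $G(X)=(\alpha_0^{q^s}-\alpha_0)X^{q^s}$; since $\gcd(s,t)=1$ gives $\F_{q^t}\cap\F_{q^s}=\fq$, we have $\alpha_0\notin\F_{q^s}$, the coefficient is nonzero, and the kernel is trivial, giving weight~$0$.

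For $\alpha\notin\F_{q^t}$ we have $\alpha_1\ne 0$, so $G$ is a $q^s$-linearised polynomial of $q^s$-degree exactly~$2$ with both its leading and its constant coefficient nonzero. To prove the upper bound of two I would use a $q^s$-Moore-determinant argument: if $x_1,x_2,x_3\in\F_{q^t}$ were $\fq$-linearly independent roots of $G$, then the non-zero column vector
\[
(-\alpha_1,\;\alpha_0^{q^s}-\alpha_0-\alpha_1 A,\;\alpha_1^{q^s}B^{q^s})^\top
\]
would annihilate the $q^s$-Moore matrix whose rows are $(x_i,x_i^{q^s},x_i^{q^{2s}})$. This is impossible: the hypothesis $\gcd(s,t)=1$ gives $\F_{q^t}\cap\F_{q^s}=\fq$, so that Moore matrix is non-singular on any $\fq$-linearly independent triple in $\F_{q^t}$.

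For the second statement, assume $\la(1,\alpha)\ra_{\fqn}\in L_U$ with $\alpha\notin\F_{q^t}$, so the weight lies in $\{1,2\}$. If it were $2$, then $G$ would have $q^2$ roots in $\F_{q^t}$. Dividing $G$ by $-\alpha_1^{q^s}B^{q^s}$ puts it into the normalised form of~\eqref{eq:normcond} with $k=2$, whose constant term equals $a_0=\alpha_1^{1-q^s}B^{-q^s}$. Condition~\eqref{eq:normcond} then forces $\mathrm{N}_{q^t/q}(a_0)=(-1)^{3t}=(-1)^t$. Because $\mathrm{N}_{q^t/q}(\alpha_1)$ and $\mathrm{N}_{q^t/q}(B)$ lie in $\fq$ and are therefore fixed by $q^s$-powering, this simplifies to $\mathrm{N}_{q^t/q}(B)^{-1}=(-1)^t$, contradicting the hypothesis. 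Hence the weight is exactly~$1$. The main delicate point I anticipate is the $q^s$-Moore-determinant step, whose validity over $\F_{q^t}$ depends crucially on the coprimality $\gcd(s,t)=1$.
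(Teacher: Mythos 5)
Your proof is correct and follows essentially the same route as the paper: reduce to the kernel of $G(X)=\alpha_1^{q^s}B^{q^s}X^{q^{2s}}+(\alpha_0^{q^s}-\alpha_0-\alpha_1 A)X^{q^s}-\alpha_1 X$ via Corollary \ref{cor:weightscomplpoli} and Remark \ref{eta=xi}, split on $\alpha_1=0$ versus $\alpha_1\neq 0$, and apply the norm criterion \eqref{eq:normcond} after normalising the leading coefficient. The only difference is that you spell out the degree-two kernel bound with a $q^s$-Moore-determinant argument and note $B\neq 0$ explicitly, details the paper leaves implicit.
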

\begin{proof}
    Let $\xi^2=A\xi+B$, for some $A,B \in \mathbb{F}_{q^t}$.
    We know from Corollary \ref{cor:weightscomplpoli} and Remark \ref{eta=xi} that the weight of $\la (1,\alpha) \ra_{\fqn}$, with $\alpha^{-1}=\alpha_0+\xi\alpha_1$, in $L_{S_{x^{q^s},\xi}\times S_{x^{q^s},\xi}}$ is the dimension of the kernel of
    \[ G(X)=\alpha_0^{q^s} X^{q^s}+X^{q^{2s}}\alpha_1^{q^s}B^{q^s}-(\alpha_0+\alpha_1 A) X^{q^s}-\alpha_1 X. \]
    If $\alpha_1\ne 0$ then $w_{L_{S_{x^{q^s},\xi}\times S_{x^{q^s},\xi}}}(\la (1,\alpha) \ra_{\fqn})\leq 2$, since $G$ is a nonzero linearised polynomial of $q^s$-degree two.
    Moreover, if this polynomial splits completely then $\mathrm{N}_{q^t/q}(\frac{\alpha_1}{\alpha_1^{q^s}B})= (-1)^t$, which happens if only if $\mathrm{N}_{q^t/q}(B)=(-1)^t$ (a condition which is independent from $\alpha_1$).
    
    If $\alpha_1=0$ and $\alpha_0 \notin \fq$ then $G(X)=\alpha_0^{q^s} X^{q^s}-\alpha_0 X^{q^s}$, which is invertible; therefore $\dim(\ker(G))=0$.
    
    Finally, if $\alpha_1=0,\alpha_0\in \F_q$, then $G$ is identically zero, and therefore $\dim_{\fq}(\ker(G))=t$.
\end{proof}

In the following, we consider the binomial of Lunardon-Polverino type, \cite{lunardon2001blocking}.

\begin{proposition}\label{prop:LPxLP}
    Let $n=2t$, $t\geq 5$, $\xi \in \mathbb{F}_{q^n}\setminus\mathbb{F}_{q^t}$, $\delta \in \F_{q^t}$ with $\N_{q^t/q}(\delta)^2\ne 1$ and $s \in \mathbb{N}$ with $\gcd(s,t)=1$. Then, 
     \[ 
    w_{L_{S_{x^{q^s}+\delta x^{q^{s(t-1)}},\xi}\times S_{x^{q^s}+\delta x^{q^{s(t-1)}},\xi}}} (\la (1,\alpha) \ra_{\fqn})\leq 
    \begin{cases}
        3, & \text{if } \alpha\notin \F_{q^t};\\
        2, & \alpha\in \F_q^t\setminus \F_q,
    \end{cases}
    \] 
    and it has weight exactly $t$ if and only if $\alpha\in \F_q^*$.
\end{proposition}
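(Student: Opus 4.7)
The plan is to apply Corollary~\ref{cor:weightscomplpoli} with $g=f$ and $\eta=\xi$ so that Remark~\ref{eta=xi} applies. Writing $\alpha^{-1}=\alpha_0+\xi\alpha_1$ and $\xi^2=A\xi+B$ (note $B\neq 0$ since $\xi\notin\F_{q^t}$), the weight of $\langle(1,\alpha)\rangle_{\F_{q^n}}$ is $\dim_{\F_q}\ker G$ for
\[ G(X) = f(\alpha_0 X+\alpha_1 Bf(X))-(\alpha_0+\alpha_1 A)f(X)-\alpha_1 X. \]
First I would expand $G$ with $f(X)=X^{q^s}+\delta X^{q^{s(t-1)}}$ and reduce using $X^{q^{st}}=X$ on $\F_{q^t}$; the result is a $q^s$-linearised polynomial whose nonzero terms sit at the five $q^s$-degrees $\{0,1,2,t-2,t-1\}$.

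Then I would split into three cases. If $\alpha\in\F_q^*$ (so $\alpha_1=0$ and $\alpha_0\in\F_q$) every coefficient vanishes, so $G\equiv 0$ and the weight equals $t$. If $\alpha\in\F_{q^t}^*\setminus\F_q^*$ then $\alpha_1=0$, so the coefficients at $q^s$-degrees $0,2,t-2$ (each a multiple of $(B\alpha_1)^{q^{sj}}$ for some $j$) vanish, and $G$ reduces to the two-term polynomial $(\alpha_0^{q^s}-\alpha_0)v^{q^s}+\delta(\alpha_0^{q^{s(t-1)}}-\alpha_0)v^{q^{s(t-1)}}$, with both coefficients nonzero because $\gcd(s,t)=1$. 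Applying the $q^s$-Frobenius transforms $G(v)=0$ into $v^{q^{2s}-1}=c$ for a specific nonzero $c\in\F_{q^t}$, and $\gcd(q^{2s}-1,q^t-1)=q^{\gcd(2,t)}-1$ bounds the kernel by $q^{\gcd(2,t)}$, giving weight at most $\gcd(2,t)\leq 2$.

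The crucial case is $\alpha\notin\F_{q^t}$ (i.e., $\alpha_1\neq 0$), where all five terms survive. I would apply $X\mapsto X^{q^{2s}}$ (a kernel-preserving rotation) to obtain $\tilde G(v)=G(v)^{q^{2s}}$, a $q^s$-linearised polynomial of $q^s$-degree $4$ with leading coefficient $(B\alpha_1)^{q^{3s}}\neq 0$ and constant term $\tilde c_0=\delta^{q^{2s}+q^s}(B\alpha_1)^{q^s}$. The standard degree bound yields $\dim_{\F_q}\ker\tilde G\leq 4$, and if equality held then normalising $\tilde G$ to have leading coefficient $-1$ and applying~\eqref{eq:normcond} with $k=4$ would force $\N_{q^t/q}(-\tilde c_0/\tilde c_4)=(-1)^{t(4+1)}=(-1)^t$. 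Since $\N_{q^t/q}(\delta)$ and $\N_{q^t/q}(B\alpha_1)$ lie in $\F_q$ (hence are fixed by Frobenius), a short computation gives $\N_{q^t/q}(\tilde c_0/\tilde c_4)=\N_{q^t/q}(\delta)^2$, so the condition collapses to $\N_{q^t/q}(\delta)^2=1$, contradicting the hypothesis. Hence the weight is at most $3$; combined with the earlier cases and $t\geq 5$, this establishes the bounds and the equivalence ``weight equals $t$ iff $\alpha\in\F_q^*$''.

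The main obstacle is carefully executing the five-term expansion of $G$ and then the norm computation. A subtle point is that the degree bound $\dim_{\F_q}\ker\tilde G\leq 4$ should be the sharper one coming from $\tilde G$ being a $q^s$-polynomial of $q^s$-degree $4$ under $\gcd(s,t)=1$ (rather than the naive $q$-degree bound), so that~\eqref{eq:normcond} applies to rule out the maximum dimension uniformly in $s$.
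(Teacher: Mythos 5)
Your proposal is correct and follows essentially the same route as the paper: expand $G$ via Corollary~\ref{cor:weightscomplpoli} and Remark~\ref{eta=xi}, raise to the $q^{2s}$-th power to obtain a $q^s$-polynomial of $q^s$-degree $4$ with extreme coefficients $(B\alpha_1)^{q^{3s}}$ and $\delta^{q^{2s}+q^s}(B\alpha_1)^{q^s}$, and invoke~\eqref{eq:normcond} together with $\N_{q^t/q}(\delta)^2\ne 1$ to exclude a fully split kernel, handling the cases $\alpha_1=0$ separately. The only cosmetic difference is in the case $\alpha\in\F_{q^t}\setminus\F_q$, where you reduce to $v^{q^{2s}-1}=c$ and a gcd count while the paper simply reads off the $q^s$-degree-two bound; both give weight at most $2$.
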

\begin{proof}
    Let $\xi^2=A\xi+B$, for some $A,B \in \mathbb{F}_{q^t}$.
    Using Corollary \ref{cor:weightscomplpoli} and Remark \ref{eta=xi}, we know that the weight of $\la (1,\alpha) \ra_{\fqn}$, with $\alpha^{-1}=\alpha_0+\xi\alpha_1$, in $L_{S_{f,\xi}\times S_{g,\eta}}$ is the dimension of the kernel of
    \[ G(X)=\alpha_0^{q^s} X^{q^s}+\delta \alpha_0^{q^{s(t-1)}} X^{q^{s(t-1)}}+(X^{q^{2s}}+\delta^{q^{s}} X) \alpha_1^{q^s}B^{q^s}+\delta \alpha_1^{q^{s(t-1)}}B^{q^{s(t-1)}} (X+\delta^{q^{s(t-1)}} X^{q^{s(t-2)}})\] \[-(\alpha_0+\alpha_1A) (X^{q^s}+\delta X^{q^{s(t-1)}})-\alpha_1 X. \]
    If we now consider $G(X)^{q^{2s}}$, we obtain a linearised polynomial of $q^s$-degree at most $4$:
    \[ G(X)^{q^{2s}}=\alpha_0^{q^{3s}} X^{q^{3s}}+\delta^{q^{2s}} \alpha_0^{q^{s}} X^{q^{s}}+(X^{q^{4s}}+\delta^{q^{3s}} X^{q^{2s}}) \alpha_1^{q^{3s}}B^{q^{3s}}+\delta^{q^{2s}} \alpha_1^{q^{s}}B^{q^{s}} (X^{q^{2s}}+\delta^{q^{s}} X)\] \[-(\alpha_0^{q^{2s}}+\alpha_1^{q^s} A^{q^s}) (X^{q^{3s}}+\delta^{q^{2s}} X^{q^{s}})-\alpha_1^{q^{2s}} X^{q^{2s}}. \]
    Note that the coefficient of $X^{q^{4s}}$ is $\alpha_1^{q^{3s}}B^{q^{3s}}$ and the coefficient of $X$ is $\alpha_1^{q^s}B^{q^s}\delta^{q^{2s}+q^s}$.
    If $\alpha_1\ne 0$ then
    \[ \N_{q^t/q}(\alpha_1^{q^{3s}}B^{q^{3s}}) \ne \N_{q^t/q}(\alpha_1^{q^s}B^{q^s}\delta^{q^{2s}+q^s}), \]
    and so, by \eqref{eq:normcond},
    \[\dim_{\fq}(\ker(G))=\dim_{\fq}(\ker(G^{q^{2s}}))\leq 3.\]
    If $\alpha_1=0$ and $\alpha_0 \notin \fq$ then $G(X)^{q^{2s}}=\alpha_0^{q^{3s}} X^{q^{3s}}+\delta^{q^{2s}} \alpha_0^{q^{s}} X^{q^{s}} -\alpha_0^{q^{2s}} (X^{q^{3s}}+\delta^{q^{2s}} X^{q^{s}})=(\alpha_0^{q^{3s}}-\alpha_0^{q^{2s}})X^{q^{3s}}+\delta^{q^{2s}}(\alpha_0^{q^{s}}-\alpha_0^{q^{2s}})X^{q^{s}}$ is a nonzero polynomial whose kernel may have dimension at most two (as we can see it as a polynomial in the variable $X^{q^s}$ with $q^s$-degree less than or equal to two), and is the zero polynomial if $\alpha_0\in \F_q$.
\end{proof}

\subsection{\texorpdfstring{$S_f\times S_f$}{f x f}}

We have seen that for some choices of $S,T$, we can control the weight distribution of $L_{S\times T}$. This seems much more difficult when we consider the product of arbitrary subspaces. In the case of linear sets defined by the Cartesian product of a subspace with itself, we can derive the following (weak) statement. 
\begin{proposition}\label{prop:fxf}
    Let $n=2t$, $\xi\in \mathbb{F}_{q^n}\setminus\mathbb{F}_{q^t}$. Then the point $\la (1,\alpha) \ra_{\fqn}$ of $L_{S_{f,\xi}\times S_{f,\xi}}$ has weight at most $3\dim_{\fq}(\mathrm{Im}(f))$ if $\alpha\notin \F_{q^t}$, exactly  $\dim_{\fq}(\ker(f(\frac{1}{\alpha}X)-\frac{1}{\alpha}f(X))$ if $\alpha\in \F_q^t\setminus \F_q$ and exactly $t$ if $\alpha\in \F_q^*$.
\end{proposition}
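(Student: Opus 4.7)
The plan is to invoke Corollary \ref{cor:weightscomplpoli} together with Remark \ref{eta=xi}, which, specialized to the case $f=g$ and $\eta=\xi$, tells us that the weight of $\langle (1,\alpha)\rangle_{\fqn}$, with $\alpha^{-1}=\alpha_0+\xi\alpha_1$ and $\xi^2=A\xi+B$, equals $\dim_{\F_q}(\ker G)$, where
\[
G(X)=f(\alpha_0 X+\alpha_1 B f(X))-(\alpha_0+\alpha_1 A)f(X)-\alpha_1 X.
\]
From here I would split into the three cases $\alpha_1\ne 0$, $\alpha_1=0$ with $\alpha_0\notin\F_q$, and $\alpha_0\in \F_q^*$.

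For $\alpha\notin\F_{q^t}$ (so $\alpha_1\ne 0$), I would rewrite $G(X)$ using additivity of $f$ as
\[
G(X)=f(\alpha_0 X)+f(\alpha_1 B f(X))-(\alpha_0+\alpha_1 A)f(X)-\alpha_1 X,
\]
and observe that for any $x\in\ker G$,
\[
\alpha_1 x=f(\alpha_0 x)+f(\alpha_1 B f(x))-(\alpha_0+\alpha_1 A)f(x).
\]
The first two summands lie in $\mathrm{Im}(f)$ and the third in the $\F_q$-subspace $(\alpha_0+\alpha_1 A)\,\mathrm{Im}(f)$. Hence $\alpha_1\ker G$ is contained in the $\F_q$-subspace $\mathrm{Im}(f)+\mathrm{Im}(f)+(\alpha_0+\alpha_1 A)\,\mathrm{Im}(f)$, whose dimension is at most $3\dim_{\F_q}(\mathrm{Im}(f))$. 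Multiplication by $\alpha_1\ne 0$ is a bijection, giving the desired upper bound $\dim_{\F_q}(\ker G)\leq 3\dim_{\F_q}(\mathrm{Im}(f))$.

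For $\alpha\in\F_{q^t}^*$ (so $\alpha_1=0$ and $\alpha_0=1/\alpha$), the polynomial collapses to
\[
G(X)=f\!\left(\tfrac{1}{\alpha}X\right)-\tfrac{1}{\alpha}f(X).
\]
If $\alpha\in\F_q^*$, then since $f$ is $\F_q$-linear we have $f(\tfrac{1}{\alpha}X)=\tfrac{1}{\alpha}f(X)$, so $G\equiv 0$ and the weight is $t$, in agreement with the expected value from Lemma \ref{prop:weightpointSalphaT} (the point lies in the $\F_q$-orbit of $\langle(0,1)\rangle_{\fqn}$ under the natural action). If $\alpha\in\F_{q^t}\setminus\F_q$, the formula $\dim_{\F_q}(\ker(f(\tfrac{1}{\alpha}X)-\tfrac{1}{\alpha}f(X)))$ is the stated weight directly.

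The main (minor) obstacle is the bookkeeping in the $\alpha\notin\F_{q^t}$ case: one must be careful that $\mathrm{Im}(f)$ is only an $\F_q$-subspace, not an $\F_{q^t}$-subspace, so the scalar multiple $(\alpha_0+\alpha_1 A)\mathrm{Im}(f)$ genuinely contributes a (possibly different) $\F_q$-subspace of the same dimension. The factor $3$ arises from the three summands containing $f$ after additivity is applied; no finer algebraic relation is used, which is why the bound is described in the statement as \emph{weak}.
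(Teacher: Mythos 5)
Your proposal is correct and follows essentially the same route as the paper's proof: both reduce to the kernel of $G(X)=f(\alpha_0X)+f(\alpha_1Bf(X))-(\alpha_0+\alpha_1A)f(X)-\alpha_1X$ via Corollary \ref{cor:weightscomplpoli} and Remark \ref{eta=xi}, and in the case $\alpha_1\neq 0$ both observe that $\alpha_1\ker(G)$ lies in a sum of three images of $f$, giving the bound $3\dim_{\F_q}(\mathrm{Im}(f))$. The cases $\alpha_1=0$ with $\alpha_0\in\F_q^*$ and $\alpha_0\notin\F_q$ are handled identically.
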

\begin{proof}
    By Corollary \ref{cor:weightscomplpoli} and Remark \ref{eta=xi}, we know that we need to find the dimension of the kernel of
\[G(X)=f(\alpha_0X)+f(\alpha_1Bf(X))-(\alpha_0+\alpha_1A)f(X)-\alpha_1X,\]
where $\alpha^{-1}=\alpha_0+\xi\alpha_1$. If $\alpha_1=0$, that is, if $\alpha\in \F_{q^t}^*$, then $G(X)=f(\frac{1}{\alpha}X)-\frac{1}{\alpha}f(X),$ which shows the second part of the statement. We also see that  if $\alpha\in \F_q$, $G(X)$ is identically zero, so the associated point has weight $t$.
If $\alpha_1\neq 0$, then we see that for $x\in \ker(G),$
$x=\frac{1}{\alpha_1}(f(\alpha_0x)+f(\alpha_1Bf(x))-(\alpha_0+\alpha_1A)f(x)),$ and hence, if $\dim_{\fq}(\mathrm{Im}(f))=k$ then $\dim_{\fq}(\ker(G))\leq 3k.$
\end{proof}

\begin{remark}
Although Proposition~\ref{prop:fxf} provides only a rather weak upper bound on the weights, it can still yield meaningful information when applied to polynomials $f$ having a large kernel (and hence a small image). In such cases, the bound becomes significantly sharper and allows us to identify families of linear sets with constrained weight distributions.
\end{remark}

\section{\texorpdfstring{Linear sets determined by $\F_{q^t}\times f$}{Linear sets determined by Fqt x f}}

\subsection{\texorpdfstring{The weight distribution of $L_{\F_{q^t}\times S_{f,\xi}}.$}{The weight distribution of linear sets determined by Fqt x f}}

In the next theorem, we show how the weight distribution of a linear set $L_{\mathbb{F}_{q^t}\times S_{f,\xi}}$ in $\PG(1,q^{2t})$ is entirely determined by the weight distribution of the linear set $L_f$ in $\PG(1,q^t)$: we find two points of weight $t$, and the weights of the other points are in one-to-one  correspondence with the weights of the points in $L_f=\{\langle (x,f(x)\rangle_{\F_{q^t}}\mid x\in \F_{q^t}^*\}.$ 

We will use this to provide an iterative construction of linear sets with many different weights.

\begin{theorem}\label{thm:bigfromsmall}
Let $n=2t$, $\xi \in \mathbb{F}_{q^n}\setminus\mathbb{F}_{q^t}$ and $f(x) \in\mathcal{L}_{t,q}$. Suppose that $\langle (1,0)\rangle_{\mathbb{F}_{q^t}}\notin L_f$. Then the $\fq$-linear set $L_{\mathbb{F}_{q^t}\times S_{f,\xi}}$ has the following properties:
\begin{enumerate}
    \item $\mathrm{rk}(L_{\mathbb{F}_{q^t}\times S_{f,\xi}})=n$;
    \item $w_{L_{\mathbb{F}_{q^t}\times S_{f,\xi}}}(\langle (1,0)\rangle_{\mathbb{F}_{q^n}})=w_{L_{\mathbb{F}_{q^t}\times S_{f,\xi}}}(\langle (0,1)\rangle_{\mathbb{F}_{q^n}})=t$;
    \item $w_{L_{\mathbb{F}_{q^t}\times S_{f,\xi}}}(\langle (1,m)\rangle_{\mathbb{F}_{q^n}})=w_{L_f}(\langle (m_0,m_1)\rangle_{\mathbb{F}_{q^t}})$, where $m=m_0+m_1\xi$, for some $m_0,m_1 \in \mathbb{F}_{q^t}$;
    \item Let $W_{L_f}=\sum_{w=1}^t A_w X^w$ be the weight enumerator of the linear set $L_f$, where $A_w$ is the number of points of weight $w$. Then $W_{L_{\mathbb{F}_{q^t}\times S_{f,\xi}}}=2 X^t+(q^t-1)W_{L_f}.$
\end{enumerate}
\end{theorem}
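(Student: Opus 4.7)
The plan is to prove the four items in turn --- (1) and (2) are direct $\F_q$-dimension counts, (3) is the technical heart, and (4) follows by a counting argument from (2) and (3) --- writing $U := \F_{q^t} \times S_{f,\xi}$ throughout. For (1), since $\xi \notin \F_{q^t}$ the map $u \mapsto u + \xi f(u)$ is $\F_q$-linear and injective, so $\dim_{\F_q} S_{f,\xi} = t$, giving $\dim_{\F_q} U = 2t = n$. For (2), a direct computation gives $\langle (1,0) \rangle_{\F_{q^n}} \cap U = \F_{q^t} \times \{0\}$ and $\langle (0,1) \rangle_{\F_{q^n}} \cap U = \{0\} \times S_{f,\xi}$, each of $\F_q$-dimension $t$.

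For (3), I would apply Lemma~\ref{prop:weightpointSalphaT} to obtain
\[
w_{L_U}(\langle (1,m) \rangle_{\F_{q^n}}) = \dim_{\F_q}(\F_{q^t} \cap m^{-1} S_{f,\xi}).
\]
Writing $m = m_0 + m_1 \xi$ with $m_0, m_1 \in \F_{q^t}$, an element $\rho \in \F_{q^t}$ lies in $m^{-1} S_{f,\xi}$ iff there exists $u \in \F_{q^t}$ with $(m_0 + m_1 \xi)\rho = u + \xi f(u)$; expanding in the $\F_{q^t}$-basis $\{1, \xi\}$ of $\F_{q^n}$ splits this as $u = m_0 \rho$ and $f(u) = m_1 \rho$, equivalent to the single equation $f(m_0 \rho) = m_1 \rho$. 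Separately, the linear set $L_f$ is by definition associated with the $\F_q$-subspace $W_f = \{(x, f(x)) : x \in \F_{q^t}\}$, so $w_{L_f}(\langle (m_0, m_1) \rangle_{\F_{q^t}}) = \dim_{\F_q} \{\lambda \in \F_{q^t} : f(\lambda m_0) = \lambda m_1\}$. These two dimensions coincide under $\rho \leftrightarrow \lambda$, proving (3). The hypothesis $\langle (1,0) \rangle_{\F_{q^t}} \notin L_f$, equivalent to $\ker f = \{0\}$, is precisely what guarantees consistency in the boundary cases $m_0 = 0$ or $m_1 = 0$, where both sides must collapse to $0$.

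For (4), I would partition $\PG(1, q^{2t})$ into $\langle (1,0) \rangle$, $\langle (0,1) \rangle$, and the remaining $q^{2t} - 1$ points $\langle (1, m) \rangle$ with $m \in \F_{q^{2t}}^*$. The first two contribute $2X^t$ to the weight enumerator by (2). The map $m \mapsto \langle (m_0, m_1) \rangle_{\F_{q^t}}$ is a surjection $\F_{q^{2t}}^* \twoheadrightarrow \PG(1, q^t)$ whose fibres are the $\F_{q^t}^*$-scaling orbits, each of size $q^t - 1$; hence by (3), each point of weight $w \geq 1$ in $L_f$ lifts to exactly $q^t - 1$ points of weight $w$ in $L_U$, contributing $(q^t - 1) W_{L_f}$ to the weight enumerator. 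The main obstacle is item (3), which --- though not deep --- requires careful coordinate bookkeeping to verify the correspondence in all cases, including the boundary situations where $m_0$ or $m_1$ vanishes and the invertibility of $f$ is needed.
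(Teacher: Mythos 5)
Your proposal is correct, and for the key item (3) it takes a genuinely different route from the paper. The paper proves (3) from scratch by a two-sided argument: it picks an explicit $\F_q$-basis $(\alpha_i, x_i+\xi f(x_i))$ of $(\F_{q^t}\times S_{f,\xi})\cap\langle(1,m)\rangle_{\F_{q^n}}$, shows the $x_i$ are $\F_q$-independent and all define the point $\langle(m_0,m_1)\rangle_{\F_{q^t}}$, and then runs the construction in reverse (setting $\beta_i=x_i/m_0=f(x_i)/m_1$, which is where the hypothesis $\langle(1,0)\rangle_{\F_{q^t}}\notin L_f$ is invoked) to get the opposite inequality. You instead reduce via Lemma~\ref{prop:weightpointSalphaT} to $\dim_{\F_q}(\F_{q^t}\cap m^{-1}S_{f,\xi})$ and observe that both weights equal $\dim_{\F_q}\ker\bigl(f(m_0X)-m_1X\bigr)$ on $\F_{q^t}$, so equality is immediate with no case-splitting into two inequalities; this is shorter and more in the spirit of Corollary~\ref{cor:weightscomplpoli}, while the paper's argument makes the vector-level bijection explicit and avoids the polynomial description. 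One small remark: your identification of the two kernels actually holds verbatim even when $m_0=0$ or $m_1=0$ (both sides then equal $0$ or $\dim_{\F_q}\ker f$, respectively), so the hypothesis $\ker f=\{0\}$ is not strictly what rescues the boundary cases in your version --- it is needed in the paper's converse construction, and it is what forces the boundary weights to be $0$ so that the points of weight $t$ are exactly the two claimed ones; this imprecision does not affect the validity of your argument. Items (1), (2) and (4) match the paper's treatment (direct dimension counts and counting the fibres of $m\mapsto\langle(m_0,m_1)\rangle_{\F_{q^t}}$, each of size $q^t-1$).
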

\begin{proof}
    The first two points are easy to check.
    Now, suppose that $w_{L_{\mathbb{F}_{q^t}\times S_{f,\xi}}}(\langle (1,m)\rangle_{\mathbb{F}_{q^n}})=j\geq 1$ then there exist $\alpha_1,\ldots,\alpha_j,x_1,\ldots,x_j \in \mathbb{F}_{q^t}$ such that
    \[ \langle (\alpha_1,x_1+\xi f(x_1)),\ldots,(\alpha_j,x_j+\xi f(x_j))\rangle_{\mathbb{F}_q} = (\mathbb{F}_{q^t}\times S_{f,\xi}) \cap \langle (1,m)\rangle_{\mathbb{F}_{q^n}}. \]
    It immediately follows that for any $i \in \{1,\ldots,j\}$ we have
    \[ \alpha_i^{-1}(x_i+\xi f(x_i))=m, \]
    and since the $\alpha_i$'s are in $\mathbb{F}_{q^t}$ and $\{1,\xi\}$ is an $\mathbb{F}_{q^t}$-basis of $\mathbb{F}_{q^n}$ we have that
    \[ m_0=\alpha_i^{-1} x_i\,\,\,\text{and}\,\,\,m_1=\alpha_i^{-1} f(x_i). \]
    Therefore,
    \[\langle (m_0,m_1)\rangle_{\mathbb{F}_{q^t}}=\langle (\alpha_i^{-1}x_i,\alpha_i^{-1}f(x_i))\rangle_{\mathbb{F}_{q^t}}=\langle (x_i,f(x_i))\rangle_{\mathbb{F}_{q^t}},\]
    for any $i \in \{1,\ldots,j\}$.
    Note also that $x_1,\ldots,x_j$ are $\fq$-linearly independent. Indeed, if there exist $a_1,\ldots,a_j \in \mathbb{F}_{q}$ such that 
    \[ a_1x_1+\ldots+a_jx_j=0, \]
    then 
    \[ a_1(\alpha_1,x_1+\xi f(x_1))+\ldots+a_j (\alpha_j,x_j+\xi f(x_j))=(a_1\alpha_1+\ldots+a_j\alpha_j,0) \in (\mathbb{F}_{q^t}\times S_{f,\xi}) \cap \langle (1,m)\rangle_{\mathbb{F}_{q^n}},\]
    implying that $a_1\alpha_1+\ldots+a_j\alpha_j=0.$
     It follows that \begin{align*}\sum\nolimits_{i=1}^ja_i\left(\alpha_i,x_i+\xi f(x_i)\right)=\left(\sum\nolimits_{i=1}^j a_i \alpha_i,\sum\nolimits_{i=1}^j a_ix_i +\sum\nolimits_{i=1}^j a_if(x_i)\right)=\\\left(\sum\nolimits_{i=1}^j a_i \alpha_i,\sum\nolimits_{i=1}^j a_ix_i + f\left(\sum\nolimits_{i=1}^j a_ix_i\right)\right)=(0,0),\end{align*} and therefore, the $j$ vectors $(\alpha_i,x_i+\xi f(x_i))$, $i=1,\ldots, j$ are not spanning a $j$-dimensional space.
    
    Therefore, we have that 
    \[w_{L_{\mathbb{F}_{q^t}\times S_{f,\xi}}}(\langle (1,m)\rangle_{\mathbb{F}_{q^n}})\leq w_{L_f}(\langle (m_0,m_1)\rangle_{\mathbb{F}_{q^t}}).\]
    Now, suppose that $w_{L_f}(\langle (m_0,m_1)\rangle_{\mathbb{F}_{q^t}})=\ell$. As before this means that there exist $x_1,\ldots,x_{\ell} \in \mathbb{F}_{q^t}$ such that
    \[ \langle (x_1,f(x_1)),\ldots, (x_\ell,f(x_\ell))\rangle_{\fq}=U_f \cap \langle (m_0,m_1)\rangle_{\mathbb{F}_{q^t}}, \]
 where $U_f=\{\langle(x,f(x))\rangle_{\F_{q^t}}\mid x\in \F_{q^t}\}$.
 
 Note that $m_0$ and $m_1$ cannot be zero as $\langle (0,1)\rangle_{\mathbb{F}_{q^t}}, \langle (1,0)\rangle_{\mathbb{F}_{q^t}} \notin L_f$. So, we have that $m_1/m_0=f(x_i)/x_i$ for any $i \in \{1,\ldots,\ell\}$.
    Define $\beta_i=x_i/m_0=f(x_i)/m_1$. 
    We see that $(\beta_i,x_i+\xi f(x_i))=(\beta_i,m_0\beta_i+\xi m_1\beta_i)=\beta_i(1,m).$
    Therefore,
    \[ (\beta_1,x_1+\xi f(x_1)),\ldots,(\beta_\ell,x_\ell+\xi f(x_\ell)) \in (\mathbb{F}_{q^t}\times S_{f,\xi}) \cap \langle (1,m)\rangle_{\mathbb{F}_{q^n}},\]
    and arguing as before one can see that the $(\beta_i,x_i+\xi f(x_i))$'s are $\fq$-linearly independent. So, 
    \[w_{L_{\mathbb{F}_{q^t}\times S_{f,\xi}}}(\langle (1,m)\rangle_{\mathbb{F}_{q^n}})\geq w_{L_f}(\langle (m_0,m_1)\rangle_{\mathbb{F}_{q^t}}),\]
    and the statement of the third bullet point follows.

  Finally, we claim that the number of points in $L_{\mathbb{F}_{q^t}\times S_{f,\xi}}$, different from $\langle(0,1)\rangle_{\F_{q^{n}}}$ and $\langle(1,0)\rangle_{\F_{q^n}}$ of weight $w$ is $q^t-1$ times the number of points of weight $w$ in $L_f$. Suppose that $\langle (1,m)\rangle_{\F_{q^n}}$ and $\langle (1,m')\rangle_{\F_{q^n}}$ correspond to the same point $\langle (m_0,m_1)\rangle_{\F_{q^t}}$. Then $m=m_0+m_1\xi$ and $m'=m_0'+m_1'\xi$ where $m_0=\lambda m_0'$ and $m_1=\lambda m_1'$ for some $\lambda\in \F_{q^t}$ and it follows that $m'=\lambda m$ where $\lambda\in \F_{q^t}^*$. Hence, a point $\langle (m_0,m_1)\rangle_{\F_{q^t}}$ gives rise to $q^t-1$ different points $\langle (1,m)\rangle_{\F_{q^n}}$. Vice versa, if $m'=\lambda m$ where $\lambda\in \F_{q^t}^*,$ then $\langle (m_0,m_1)\rangle_{\F_{q^t}}=\langle (m'_0,m'_1)\rangle_{\F_{q^t}}$. The statement follows.
\end{proof}

\begin{remark}
    The above result extends  \cite[Corollary~4.7]{napolitano2022linear}, which was stated for scattered polynomials (see also \cite{longobardi2024scattered}).
\end{remark}

\subsection{\texorpdfstring{A linear set with many different weights in $\PG(1,q^{2^m})$ and its associated set of even type}{A linear set with many different weights and its associated set of even type}}

Using Theorem \ref{thm:bigfromsmall} we can iteratively construct linear sets in $\PG(1,q^{2t})$ from a linear set in $\PG(1,q^t),$ while keeping control over the weight distribution of the obtained linear set.

\begin{example}
Starting from a scattered linear set $L_f$ of rank $2$ in $\PG(1,q^2)$ not containing $\langle(0,1)\rangle_{\F_{q^2}}$ (that is, an $\F_q$-subline), Theorem \ref{thm:bigfromsmall} shows that $L_{\F_{q^t}\times S_{f,\xi}}$ is a linear set of rank $4$ in $\PG(1,q^4)$ with complementary weights which has two points of weight $2$ and all other points weight $1$. The total number of points in this linear set is $q^3+q^2-q+1=2+(q^2-1)(q+1)$. We know that $W_{L_f}=(q+1)X$ so indeed, $W_{L_{\F_{q^t}\times S_{f,\xi}}}=2X^2+(q^2-1)W_{L_f}=2X^2+(q^2-1)(q+1)X.$
\end{example}

If we iterate the construction of Theorem \ref{thm:bigfromsmall}, we find the following corollary, where we use $\Psi(L_f)$ for the  linear set $L_{\F_{q^t}\times S_{f,\xi}}$ in $\PG(1,q^{2t})$ corresponding to   $L_f=\{\langle (x,f(x))\rangle_{\F_{q^t}}|x\in \F_{q^t}^*\}$ be a linear set in $\PG(1,q^t).$

\begin{corollary}\label{cor:iterating}

   If $W_2(X)$ is the weight distribution of a linear set $L_f$ in $\PG(1,q^2)$, then the weight distribution of the linear set $\Psi^{m-1}(L_f)$, which is contained in $\PG(1,q^{2^{m}})$ is given by
   \begin{align*}
   W_{2^{m}}(X)=2X^{2^{m-1}}+2(q^{2^{m-1}}-1)X^{2^{m-2}}+2(q^{2^{m-1}}-1)(q^{2^{m-2}}-1)X^{2^{m-3}}+\ldots\\
   +2(q^{2^{m-1}}-1)(q^{2^{m-2}}-1)\cdots(q^4-1)X^2+(q^{2^{m-1}}-1)(q^{2^{m-2}}-1)\cdots(q^4-1)(q^2-1)W_2(X).
   \end{align*}
   The total size of the linear set is
   \[|L_f|\cdot\prod_{i=1}^{m-1}(q^{2^i}-1)+2\sum_{k=1}^{m-1}\prod_{i=k+1}^{m-1}(q^{2^i}-1).\]
\end{corollary}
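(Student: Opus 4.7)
The plan is a straightforward induction on $m$, with Theorem~\ref{thm:bigfromsmall}(4) serving as the single engine. For $m=2$ the claim reduces to $W_4(X)=2X^2+(q^2-1)W_2(X)$, which is exactly what Theorem~\ref{thm:bigfromsmall}(4) gives when applied to $L_f\subseteq\PG(1,q^2)$.

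For the inductive step, I would write $\Psi^{m-1}(L_f)=\Psi(\Psi^{m-2}(L_f))$ and apply Theorem~\ref{thm:bigfromsmall}(4) with $t=2^{m-1}$ and with the ``$L_f$'' of the theorem now being $\Psi^{m-2}(L_f)\subseteq\PG(1,q^{2^{m-1}})$. This yields
\[
W_{2^m}(X)\;=\;2X^{2^{m-1}}+(q^{2^{m-1}}-1)\,W_{2^{m-1}}(X).
\]
Substituting the inductive expression for $W_{2^{m-1}}(X)$ and distributing the factor $(q^{2^{m-1}}-1)$ across every monomial produces the telescoping formula in the statement: for $k=1,\dots,m-2$ the coefficient of $X^{2^{m-1-k}}$ accumulates to $2\prod_{j=0}^{k-1}(q^{2^{m-1-j}}-1)$, while the coefficient of $W_2(X)$ becomes $\prod_{i=1}^{m-1}(q^{2^i}-1)$. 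No further algebra is required beyond bookkeeping.

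For the size, I would use that the total number of points in a linear set equals $W(1)$, the sum of its weight multiplicities. Setting $X=1$ in the formula for $W_{2^m}(X)$ and regrouping yields the two asserted contributions: the term $|L_f|\cdot\prod_{i=1}^{m-1}(q^{2^i}-1)$ comes from the $W_2(1)=|L_f|$ summand, while the remaining monomials $2X^{2^{m-1-k}}\prod(\cdots)$ collapse to $2\sum_{k=1}^{m-1}\prod_{i=k+1}^{m-1}(q^{2^i}-1)$, with the $k=m-1$ empty-product summand accounting for the leading $2X^{2^{m-1}}$.

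The only subtlety worth flagging is that Theorem~\ref{thm:bigfromsmall} requires its input linear set to avoid $\langle(1,0)\rangle$, whereas $\Psi$ always outputs a linear set that \emph{contains} that point (with weight $t$). Before reapplying $\Psi$ in the inductive step, one therefore passes to a $\mathrm{PGL}(2,q^{2^{m-1}})$-equivalent representative of $\Psi^{m-2}(L_f)$ that misses $\langle(1,0)\rangle$; since the weight enumerator is a $\mathrm{PGL}$-invariant, this substitution has no effect on $W_{2^{m-1}}(X)$. This is the only genuine obstacle, and it is a cosmetic one.
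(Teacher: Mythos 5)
Your proposal is correct and matches the paper's intended argument: the paper states this corollary without proof as the result of iterating Theorem~\ref{thm:bigfromsmall}, and your induction on $m$ via $W_{2^m}(X)=2X^{2^{m-1}}+(q^{2^{m-1}}-1)W_{2^{m-1}}(X)$, together with evaluating at $X=1$ for the size, is exactly that iteration made explicit. Your remark about first replacing $\Psi^{m-2}(L_f)$ by a $\mathrm{PGL}$-equivalent copy avoiding $\langle(1,0)\rangle$ (and $\langle(0,1)\rangle$, so that it is again of the form $L_g$ for an invertible linearised $g$) is a point the paper glosses over, and your resolution of it is sound.
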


Using that $W_{L_f}(X)=(q+1)X$ when $L_f$ is a subline, we find the following for $q=2$:

\begin{corollary}\label{cor:manyweights}
    
Let $L_f$ be a subline in $\PG(1,2^2)$. Then $\Psi^{m-1}(L_f)$ is contained in $\PG(1,2^{2^{m}})$, has $3\prod_{i=1}^{m-1}(2^{2^i}-1)$ points of weight $1$ and has 
$2\prod_{i=k+1}^{m-1}(2^{2^i}-1)$ points of weight $2^k$, where $1\leq k\leq m-1$ and the empty product equals $1$. The size of $\psi_{m-1}(L_f)$ is $\prod_{i=0}^{m-1}(2^{2^i}-1)+2\sum_{k=0}^{m-1}\prod_{k+1}^{m-1}(2^{2^i}-1).$
\end{corollary}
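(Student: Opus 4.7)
The plan is to apply Corollary~\ref{cor:iterating} directly, specialized to $q=2$ and to the case where $L_f$ is an $\F_2$-subline of $\PG(1,4)$.

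First, I would check that the iteration $\Psi^{m-1}$ is well-defined at every stage. The construction of Theorem~\ref{thm:bigfromsmall} requires the input linear set to avoid $\langle(1,0)\rangle$, and this can always be arranged by a suitable projectivity since weight distributions are $\mathrm{PGL}$-invariant. Hence Corollary~\ref{cor:iterating} applies and the formula it provides for $W_{2^{m}}(X)$ is legitimate.

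Next, I would record the weight enumerator $W_2(X)$ of an $\F_q$-subline. Such a subline in $\PG(1,q^2)$ consists of exactly $q+1$ points, all of weight one, so $W_2(X)=(q+1)X$, which for $q=2$ becomes $W_2(X)=3X$.

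Reading off the coefficients in the formula of Corollary~\ref{cor:iterating} with these data then yields at once the claimed weight counts: the term $(q^{2^{m-1}}-1)\cdots(q^2-1)W_2(X)$ at $q=2$ contributes $3\prod_{i=1}^{m-1}(2^{2^i}-1)$ points of weight~$1$, while the generic summand $2\prod_{i=k+1}^{m-1}(q^{2^i}-1)X^{2^k}$ gives $2\prod_{i=k+1}^{m-1}(2^{2^i}-1)$ points of weight $2^k$ for $1\le k\le m-1$, with the empty-product convention accounting for the two points of weight $2^{m-1}$. Summing the coefficients and using the trivial identity $2^{2^0}-1=1$ to absorb the factor~$3$ into the product $\prod_{i=0}^{m-1}(2^{2^i}-1)$ and to extend the sum to $k=0$ then produces the stated total size.

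There is no substantive mathematical obstacle here: all the content is already carried by Theorem~\ref{thm:bigfromsmall} and Corollary~\ref{cor:iterating}. The only real care is combinatorial bookkeeping, in particular keeping the index shifts and the empty-product conventions consistent when rewriting the sum.
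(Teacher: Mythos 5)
Your proposal is correct and follows essentially the same route as the paper: the paper's proof likewise just instantiates Corollary~\ref{cor:iterating} with $|L_f|=q+1$ and $W_2(X)=(q+1)X$, then rewrites $(q+1)=(q-1)+2$ so that the factor $q-1=q^{2^0}-1$ is absorbed into the product and the extra $2\prod_{i=1}^{m-1}(q^{2^i}-1)$ becomes the $k=0$ term of the sum — exactly your bookkeeping at $q=2$. Your added remark on ensuring $\langle(1,0)\rangle$ is avoided at each iteration is a reasonable extra precaution that the paper leaves implicit.
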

\begin{proof}
    We use that the total number of points is
    \begin{align*}
    (q+1)\prod_{i=1}^{m-1}(q^{2^i}-1)+2\sum_{k=1}^{m-1}\prod_{k+1}^{m-1}(q^{2^i}-1)=&\\
    (q-1)\prod_{i=1}^{m-1}(q^{2^i}-1)+2\Pi_{i=1}^{m-1}(q^{2^i}-1)+2\sum_{k=1}^{m-1}\prod_{k+1}^{m-1}(q^{2^i}-1)=&\\
    \prod_{i=0}^{m-1}(q^{2^i}-1)+2\sum_{k=0}^{m-1}\prod_{k+1}^{m-1}(q^{2^i}-1).
    \end{align*}
\end{proof}

\subsection{On certain sets of even type}

A {\em set of even type} in $\PG(2,q)$ is a set of points such that any line intersects the set in an even number of points. It is easy to see that the number of points in an even set is even, and that even sets can only exist in projective planes of even order.

It is well-known that using certain $\F_2$-linear sets in $\PG(1,q^t)$, $q$ even, one can construct translation sets which form {\em translation hyperovals}, or {\em translation KM-arcs} \cite{de2016linear}. Using the same construction, we can create small sets of even sets of type $(0,\ 2,\ \sqrt{q},\ 2\sqrt{q}-2)$ in $\PG(2,q)$, $q$ an even square (see Corollary \ref{cor:evensetfewweights}), as well as even sets of type $(0,2,4,8,\ldots, 2^{m-1})$ in $\PG(2,2^{2^m})$ (see Corollary \ref{cor:evenset}),

\begin{lemma}\label{lem:translationset}
    Let $L_g=\langle (x,g(x))\rangle_{\fq}\mid x\in \F_{q}\}$ be an $\F_2$-linear set in $\PG(1,q)$, $q=2^s$, which has points of weight $w_1,\ldots,w_k$  let $\mathcal{S}=\{\langle(1,x,g(x))\rangle_{\fq}\mid x\in \F_q^* \}$ and let $\mathcal{S}_\infty$ be the complement of the set $\{\langle(0,x,g(x))\rangle_{\fq}\mid x\in \F_q^* \}$. Then $\overline{\mathcal{S}}=\mathcal{S}\cup \mathcal{S}_{\infty}$ is a set of even type in $\PG(2,q)$ of size $2q+1-|L_g|$ that has line intersection sizes $0,2^{w_1},\ldots,2^{w_k}$ and $q+1-|L_g|$.
\end{lemma}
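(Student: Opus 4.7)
I would prove this by computing $|\ell\cap\overline{\mathcal{S}}|$ for an arbitrary line $\ell$ of $\PG(2,q)$, splitting into three cases: $\ell=\ell_\infty$; $\ell$ affine with $\ell\cap\ell_\infty$ corresponding (under the natural identification of $\ell_\infty$ with $\PG(1,q)$ given by $\langle(0,a,b)\rangle\leftrightarrow\langle(a,b)\rangle$) to a point of $L_g$; and $\ell$ affine with $\ell\cap\ell_\infty\in\mathcal{S}_\infty$. The size claim $|\overline{\mathcal{S}}|=|\mathcal{S}|+|\mathcal{S}_\infty|=q+(q+1-|L_g|)=2q+1-|L_g|$ is then immediate from the disjointness of $\mathcal{S}$ and $\mathcal{S}_\infty$.

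For $\ell=\ell_\infty$, trivially $\overline{\mathcal{S}}\cap\ell_\infty=\mathcal{S}_\infty$, of size $q+1-|L_g|$. The first thing I would verify is that $|L_g|$ is odd: the $\F_2$-subspace $\{(x,g(x))\colon x\in\fq\}$ has $q-1=2^s-1$ nonzero vectors, distributed among the points of $L_g$ with $P\in L_g$ accounting for $2^{w(P)}-1$ vectors, and each of these summands being odd forces the number of summands $|L_g|$ to be odd. Hence $q+1-|L_g|$ is even.

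For the affine cases, I would parametrise $\ell$ as $\{(1,a+tu,b+tv)\colon t\in\fq\}\cup\{P_\ell\}$ with $P_\ell=\langle(0,u,v)\rangle$ and $(a,b)$ any affine point of $\ell$. Using the additivity of $g$, the condition $(1,a+tu,b+tv)\in\mathcal{S}$ reduces, after setting $c=v/u$ and $s=tu$ when $u\neq 0$, to the $\F_2$-linear equation $\phi(s):=g(s)-cs=g(a)-b$. By the very definition of the weight of a point in a linear set, $\ker\phi=\{s\colon g(s)=cs\}$ has $\F_2$-dimension equal to the weight $w_c$ of $\langle(1,c)\rangle$ in $L_g$ when $\langle(1,c)\rangle\in L_g$, and is trivial otherwise. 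Assembling: if $P_\ell\in L_g$ of weight $w$, then $P_\ell\notin\mathcal{S}_\infty$, so $|\ell\cap\overline{\mathcal{S}}|=|\ell\cap\mathcal{S}|\in\{0,2^w\}$ according to whether $g(a)-b\in\mathrm{Im}(\phi)$; if $P_\ell\notin L_g$ with $u\neq 0$, then $\phi$ is bijective, so there is exactly one affine point of $\ell$ on $\mathcal{S}$ and $P_\ell\in\mathcal{S}_\infty$ contributes one more, giving $|\ell\cap\overline{\mathcal{S}}|=2$. The ``vertical'' sub-case $u=0$ is handled by direct inspection: $P_\ell=\langle(0,0,1)\rangle$ is automatically in $\mathcal{S}_\infty$ because $g(0)=0$ forces $\langle(0,1)\rangle\notin L_g$, and a unique affine solution occurs, again yielding $2$.

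The intersection sizes that appear are therefore $0$, the values $2^{w_i}$ for each weight $w_i$ of $L_g$, the constant $2$ (which coincides with $2^1$ whenever $L_g$ has a weight-one point, so that it is absorbed into the stated list), and $q+1-|L_g|$; all of these are even, so $\overline{\mathcal{S}}$ is a set of even type. The main obstacle is the careful parametrisation of the affine line together with the identification $\dim_{\F_2}\ker\phi=w_c$; once these are in place, the rest is bookkeeping, and one should also separately check the vertical direction $\langle(0,0,1)\rangle$ which is not visible through the substitution $c=v/u$.
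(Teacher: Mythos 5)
Your proof is correct and follows the same strategy as the paper: split into the line at infinity, lines whose point at infinity is a determined direction (a point of $L_g$), and lines through a non-determined direction, then count affine intersections via the $\F_2$-linear equation $g(s)-cs=\text{const}$. You in fact supply details the paper leaves implicit — the parity of $|L_g|$ (hence of $q+1-|L_g|$), the explicit identification $\dim_{\F_2}\ker\phi=w_c$, the separate treatment of the vertical direction, and the remark that the intersection size $2$ from non-determined directions is only absorbed into the list $2^{w_1},\dots,2^{w_k}$ when $L_g$ has a point of weight one — all of which are consistent with (and slightly sharper than) the paper's argument.
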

\begin{proof}
    Since $g(x)$ is an $\F_q$-linear map, $\mathcal{S}_\infty$ is precisely the set of points that are not a direction determined by the set $\mathcal{S}$. Recall that the size of the set $\mathcal{S}$ is $q$ so it follows that every line with a direction inside $\mathcal{S}_\infty$ meets $\mathcal{S}$ in exactly one point. Hence, every line with direction inside the set $\mathcal{S}_\infty$ is a $2$-secant to the set $\overline{\mathcal{S}}$. Furthermore, a line with direction $\langle (x,g(x)\rangle_{\fq}$, which has weight $w_j$ in the linear set $L_g$ meets the set $\mathcal{S}$ in $0$ or $2^{w_j}$ points.
    Finally, the line $X=0$ intersects the set $\overline{\mathcal{S}}$ in a set of size $|\mathcal{S}_\infty|=q+1-|L_g|$ and the size of $\overline{\mathcal{S}}$ equals $q+(q+1-|L_g|)$
\end{proof}

\begin{corollary}\label{cor:evensetfewweights}
    There exists a set of even type in $\PG(2,q)$ of size $q+2\sqrt{q}-2$ that has line intersection sizes $0,2,\sqrt{q}$ and $2(\sqrt{q}-1)$.
\end{corollary}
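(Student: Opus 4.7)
The plan is to apply Lemma~\ref{lem:translationset} to a linear set in $\PG(1,q)$ produced by the product construction of Theorem~\ref{thm:bigfromsmall} applied to a scattered starting linear set. Write $q=2^{2k}$ with $k\geq 1$, so $\sqrt{q}=2^k$. First, one takes a scattered $\F_2$-linear set $L_f$ of rank $k$ in $\PG(1,\sqrt{q})$ not containing $\langle (1,0)\rangle_{\F_{\sqrt{q}}}$; the simple choice $f(x)=x^2$ does the job for every $k\geq 1$, and a direct check gives $L_f=\{\langle (x,x^2)\rangle_{\F_{\sqrt{q}}}:x\in \F_{\sqrt{q}}^*\}$ with $\sqrt{q}-1$ points, each of weight one, so $W_{L_f}(X)=(\sqrt{q}-1)X$. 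Applying Theorem~\ref{thm:bigfromsmall} with base field $\F_2$ and $t=k$, the linear set $L:=L_{\F_{\sqrt{q}}\times S_{f,\xi}}\subset \PG(1,q)$ has rank $2k=\log_2 q$ and, by part~(4) of that theorem, weight enumerator
\[
W_L(X)\;=\;2X^k+(\sqrt{q}-1)\,W_{L_f}(X)\;=\;2X^k+(\sqrt{q}-1)^2 X.
\]
In particular $|L|=(\sqrt{q}-1)^2+2=q-2\sqrt{q}+3$, and the only weights appearing in $L$ are $1$ and $k$, with $2^k=\sqrt{q}$.

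Since $|L|<q+1$, an element of $\mathrm{PGL}(2,q)$ can be used to move $L$ to a projectively equivalent linear set avoiding $\langle (0,1)\rangle_{\F_q}$, which may then be written as $L_g=\{\langle (x,g(x))\rangle_{\F_q}:x\in \F_q^*\}$ for some $\F_2$-linearised polynomial $g\in\F_q[x]$, with the same weight distribution. Lemma~\ref{lem:translationset} then produces a set $\overline{\cS}\subset \PG(2,q)$ of even type of size $2q+1-|L_g|=q+2\sqrt{q}-2$, whose line intersection sizes are
\[
\{0,\;2,\;2^k,\;q+1-|L_g|\}\;=\;\{0,\;2,\;\sqrt{q},\;2(\sqrt{q}-1)\},
\]
exactly as claimed.

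The only substantive computation is the weight enumerator of $L$, which is immediate from Theorem~\ref{thm:bigfromsmall}(4) once the scattered input $L_f$ has been fixed; the rest of the argument is a direct substitution of $|L_g|$ and the weight list into the conclusion of Lemma~\ref{lem:translationset}. The main (mild) technicality is converting $L$ into the graph form required by Lemma~\ref{lem:translationset}, which is handled by the $\mathrm{PGL}(2,q)$-equivalence above. The degenerate case $k=1$ (i.e.\ $q=4$) collapses the four intersection sizes to $\{0,2\}$ and the construction recovers a hyperoval of $\PG(2,4)$, consistently with the formulas.
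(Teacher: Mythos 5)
Your proof is correct and follows essentially the same route as the paper: apply Theorem~\ref{thm:bigfromsmall} to a scattered linear set to get a linear set in $\PG(1,q)$ with two points of weight $\log_2\sqrt{q}$ and all others of weight one, then feed it into Lemma~\ref{lem:translationset}. You merely make explicit two details the paper leaves implicit, namely the concrete choice $f(x)=x^2$ and the $\mathrm{PGL}(2,q)$-equivalence needed to put the resulting linear set into the graph form required by Lemma~\ref{lem:translationset}; both are handled correctly.
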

\begin{proof}
    Choosing $f$ as a scattered polynomial, the linear set $L=L_{\F_{\sqrt{q}}\times S_f,\xi}$ has exactly two points of weight $\sqrt{q}$ and all others of weight $1$  by Theorem \ref{thm:bigfromsmall}. It follows that $L$ has size $q-2\sqrt{q}+3$, so using $L$ in Lemma \ref{lem:translationset} shows the statement.
\end{proof}

\begin{corollary}\label{cor:evenset}
Let $q=2^{2^m}$.
    There exists a set of even type in $\PG(2,q)$ of size $2q+1-\prod_{i=0}^{m-1}(2^{2^i}-1)-2\sum_{k=0}^{m-1}\prod_{k+1}^{m-1}(2^{2^i}-1)$ that has line intersection sizes $0,2,2^2,2^{2^2},\ldots, 2^{2^{m-1}}$ and $q+1-\prod_{i=0}^{m-1}(2^{2^i}-1)-2\sum_{k=0}^{m-1}\prod_{k+1}^{m-1}(2^{2^i}-1).$
\end{corollary}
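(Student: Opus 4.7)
The plan is to combine Corollary \ref{cor:manyweights} with Lemma \ref{lem:translationset}. First I would use Corollary \ref{cor:manyweights} to produce an $\F_2$-linear set in $\PG(1,q)$ with $q=2^{2^m}$ having the right weight distribution, and then feed it into the translation construction of Lemma \ref{lem:translationset} to obtain the desired set of even type in $\PG(2,q)$.

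For the first step, I start from an $\F_2$-subline $L_f$ in $\PG(1,2^2)$ and apply the doubling operation $\Psi$ of Theorem \ref{thm:bigfromsmall} iteratively $m-1$ times. By Corollary \ref{cor:manyweights}, the resulting linear set $L_g:=\Psi^{m-1}(L_f)\subseteq \PG(1,q)$ has points only of weights $2^0,2^1,2^2,\ldots,2^{m-1}$, with the explicit multiplicities given there, and cardinality
\[
|L_g|=\prod_{i=0}^{m-1}(2^{2^i}-1)+2\sum_{k=0}^{m-1}\prod_{i=k+1}^{m-1}(2^{2^i}-1).
\]
Since $|L_g|<q+1$, after a suitable projectivity of $\PG(1,q)$ I may assume $L_g$ is disjoint from $\langle(0,1)\rangle_{\F_q}$, and hence admits the polynomial description $L_g=\{\langle(x,g(x))\rangle_{\F_q}\mid x\in\F_q^*\}$ required by Lemma \ref{lem:translationset}. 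Projective equivalence preserves the weight enumerator, so this coordinate change costs nothing.

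Applying Lemma \ref{lem:translationset} to this $L_g$ then yields a set $\overline{\mathcal{S}}$ of even type in $\PG(2,q)$ of size $2q+1-|L_g|$, which on substituting the above expression for $|L_g|$ is exactly the cardinality claimed in the statement. The nontrivial line intersection sizes are precisely the values $2^{w}$ for each weight $w$ appearing in $L_g$, namely $2,2^2,2^{2^2},\ldots,2^{2^{m-1}}$, plus the intersection size $q+1-|L_g|$ contributed by the line $X=0$. This reproduces the list of intersection sizes in the corollary.

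The only real obstacle is bookkeeping: one must check at each iteration of Theorem \ref{thm:bigfromsmall} that the hypothesis $\langle(1,0)\rangle\notin L_f$ is maintained, which is handled by a coordinate change at each level, and one must verify that the polynomial form required by Lemma \ref{lem:translationset} can be arranged for the final $L_g$ — both are automatic given Corollary \ref{cor:manyweights} and the strict inequality $|L_g|<q+1$.
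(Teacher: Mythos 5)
Your proposal is correct and follows essentially the same route as the paper: apply Corollary \ref{cor:manyweights} to $\Psi^{m-1}(L_f)$ for a subline $L_f$ in $\PG(1,2^2)$ to get the weight distribution and size, then feed the resulting linear set into Lemma \ref{lem:translationset}. You are in fact slightly more careful than the paper in checking that the linear set can be put into the polynomial form $\{\langle(x,g(x))\rangle_{\F_q}\mid x\in\F_q^*\}$ required by the lemma (and that the hypothesis of Theorem \ref{thm:bigfromsmall} is preserved under iteration), which the paper leaves implicit.
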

\begin{proof}
    Let $L_g=\langle (x,g(x))\rangle_{\F_{2^{2^{m}}}}\mid x\in \F_{2^{2^{m}}}\}$ be the point set of $\Psi^m(L_f)$ where $L_f$ is a subline in $\PG(1,2^2)$. Then $L_g$ has points of weight $0,1,2,2^2,\ldots,2^{m-1}$ by Corollary \ref{cor:manyweights}.
    Moreover, the size of $L_g$ is $2q+1-\prod_{i=0}^{m-1}(2^{2^i}-1)-2\sum_{k=0}^{m-1}\prod_{k+1}^{m-1}(2^{2^i}-1)$ and so the statement follows from Lemma \ref{lem:translationset}.
\end{proof}

\section{Open problems}

One of the most challenging problems in the theory of linear sets concerns the classification of all possible point–weight spectra; see, for instance, \cite{de2022weight,bonoli2005fq}.
In this paper, we provided new insight into this direction by giving criteria to determine the set of points of a fixed weight for linear sets with complementary weights.
However, the determination of all possible sizes of these sets remains unclear, and several fundamental questions remain open.

\begin{open}
    Given $k,n$, determine all possible weight distributions of $\mathbb{F}_q$-linear sets
$L_{S\times T} \subseteq \mathrm{PG}(1,q^n)$ with complementary weights.
\end{open}

Corollary~\ref{weg} gives a geometric criterion for the existence of linear sets with exactly two points of weight at least~$2$.
Therefore, naturally the following problem arises.

\begin{open}
Provide a complete algebraic classification of all such pairs $(S,T)$ defining linear sets with exactly two points of weight at least~$2$.
\end{open}

Corollary~\ref{cor:weightscomplpoli} expresses point weights using kernels of certain linearized polynomials.  
A major open problem is to translate these expressions into explicit and computable
criteria for general families of polynomials (monomials, binomials, trinomials, etc.).

\begin{open}
For which polynomials does one obtain extremal or very sparse point-weight distributions?
\end{open}

Propositions~\ref{prop:tracewithf} and \ref{prop:fxf} provide general upper bounds on the weight of points.  

\begin{open}
Determine for which polynomials these bounds are tight.
\end{open}

\section*{Acknowledgments}

The second author is very grateful for the hospitality of the School of Mathematics and Statistics of University of Canterbury, he was visiting it during the development of this research in August 2023.
The first author was partially supported by the Italian National Group for Algebraic and Geometric Structures and their Applications (GNSAGA - INdAM) CUP E53C24001950001.
The last author was partially supported by the Italian National Group for Algebraic and Geometric Structures and their Applications (GNSAGA - INdAM).

\bibliographystyle{abbrv}

\end{document}